\documentclass[a4paper]{amsart}
\usepackage{amssymb,amsmath,float,amsfonts,latexsym,amsthm,comment,pinlabel,comment,lineno}
\numberwithin{equation}{section}

\usepackage[mathscr]{eucal}

\usepackage{float}
\usepackage[T1]{fontenc}
\usepackage{txfonts}
\usepackage[pdftex,bookmarks=true]{hyperref}
\usepackage{graphicx}
\usepackage{subfigure}
\usepackage{wrapfig}
\usepackage{import}
\usepackage{epsfig}
\usepackage{enumerate}
\usepackage[all]{xy}
\usepackage{booktabs}
\theoremstyle{plain}
\newtheorem{thm}[subsection]{Theorem}

\newtheorem{defn}[subsection]{Definition}

\newtheorem{lemma}[subsection]{Lemma}

\newtheorem{prop}[subsection]{Proposition}
\theoremstyle{definition}

\newtheorem{rmk}[subsection]{Remark}
\newtheorem{eg}[subsection]{Example}

\numberwithin{equation}{section}

\author{Subash Chandra Behera}
\address{
School of Mathematics \& Computer Science\\
Indian Institute of Technology Goa\\
At Goa College of Engineering Campus\\
Farmagudi, Ponda-403401 \\
Goa, India}
\email{subash20232102@iitgoa.ac.in}
\author{Shiv Parsad}
\address{
School of Mathematics \& Computer Science\\
Indian Institute of Technology Goa\\
At Goa College of Engineering Campus\\
Farmagudi, Ponda-403401 \\
Goa, India} 
\email{shiv@iitgoa.ac.in}
\begin{document}

\title{Minima of geodeics length functions for non-uniform filling}

\subjclass{Primary 52B60; Secondary 51M09}

\keywords{Geodesic length functions; Non-uniform filling; Fat graph; Tangential Polygon; Gauss- Bonnet}

\begin{abstract}  

Kerckhoff proved that the geodesic length function $\ell_\Omega$ of a filling $\Omega$ on $S_g$ attains a unique minimum in Teichmüller space \cite{KSP}. Recent work computed these minima for uniform fillings using the algebraic machinery of \textit{dessins d'enfants} and Grothendieck-Belyi surfaces \cite{GEGGHR}. We present an elementary optimization approach for $4$-regular topological uniform fillings, bypassing this framework. Furthermore, we analyze two special classes of non-uniform $4$-regular fillings using fat graphs and optimization techniques. We explicitly compute their minima and prove that in both classes, the minimum of these length functions is attained at a triangle surface.
\end{abstract}

% \begin{abstract}  
%  Let \(S_{g}\) denote a closed oriented surface of genus \(g\geq 2\). A set \(\Omega = \{\gamma_1, \ldots, \gamma_r\}\) of pairwise non-homotopic simple closed curves on \(S_{g}\)  such that $\gamma_i$ and $\gamma_j$  are in minimal position for all $i$ and $j$ is called a filling system or simply a filling on \(S_{g}\), if \(S_{g}\setminus\Omega\) is a disjoint union of \(k\) topological polygons. This filling is said to be uniform if all the complementary polygons have the same number of sides. A famous result, first proved by Kerckhoff, states that if $\Omega$ is a filling on $S_g$, then the function $\ell_\Omega$ attains a minimum in Teichmüller space and that this minimum is unique. Girondo, González-Diez and A. Hidalgo found the minimum for uniform filling and it's attained at the Grothendieck–Belyi surface. In this article, the minimum is found by an elementary approach.

% \end{abstract}  

\maketitle

%%%%%%%%%%%%%% Section 1 (Introduction) %%%%%%%%%%%%

\section{Introduction}

% Let $S_g$ be a closed oriented surface of genus $g > 2$. A \emph{marking} on $S_g$ is a pair $(f,S_g)$, where $X$ is a closed hyperbolic surface and $f : S_g \rightarrow X$ is an orientation-preserving homeomorphism. Two markings $(f_1,X_1)$ and $(f_2,X_2)$ are considered equivalent if there is an isomorphism $\phi : X_1 \rightarrow X_2$ such that $f_2^{-1} \circ \phi \circ f_1$ is isotopic to the identity map in $S_g$. The set of equivalence classes of markings is the \emph{Teichmüller space} $\mathcal{T}_g$. For a fixed $g>2$, the space of hyperbolic surfaces up to isometry is called the moduli space and is denoted by $\mathcal{M}_g.$

Let $S_{g}$ denote a closed, orientable surface of genus $g\geq 2$. A set $\Omega = \{\gamma_1, \ldots, \gamma_d\}$ of pairwise non-homotopic simple closed curves on $S_{g}$, such that $\gamma_i$ and $\gamma_j$ are in minimal position for all $i$ and $j$, is called a filling system or simply a filling on $S_{g}$ if $S_{g}\setminus\Omega$ is a disjoint union of topological discs. This filling is said to be uniform if all the complementary regions have the same combinatorial length. Originating in the work of Thurston \cite{Thu85}, the theory of filling has grown into a vital tool for understanding mapping class groups, Teichmüller spaces, and moduli spaces of surfaces. Many authors have studied the filling of surfaces (see \cite{ThesisChangHong, RakeshDehnFilling, PasadSankiFillingSystem, BholaSankiFilling,BS,SANKI_2018}).

The geodesic length functions were introduced by Fricke and Klein in 1965 \cite{FRKF} and were later studied by Keen \cite{Keen} and Kerckhoff \cite{KSP, KSP1}. In \cite{KSP}, Kerckhoff proved that a geodesic length function is strictly convex or constant along an earthquake path. In \cite{Wolpert2}, Wolpert shows that a geodesic length function is strictly convex along a Weil–Petersson geodesic. In \cite{Luo}, Luo characterizes the geodesic length functions. In \cite{Wolpert1}, Wolpert discusses the behavior of the geodesic length functions on Teichmüller space and their implications for the Weil–Petersson geometry of Teichmüller space, alongside obtaining bounds for the gradient and Hessian of these functions.

% More recently, Girondo, González-Diez, and Hidalgo achieved the explicit calculation of the minima for these functions in the case of uniform filling using the heavy algebraic machinery of \textit{dessins d'enfants} \cite{GEGGHR}. Saha and Sanki computed the length of filling pairs whose complement consists of exactly two complementary disks \cite{SahaSanki25}. Furthermore, An, Saha, and Sanki classified all minimal filling pairs in genus two and determined the length of the shortest minimal filling pair on $S_2$ \cite{Bhola26}.

 A famous result, first proved by Kerckhoff, states that if $\Omega$ is a filling on $S_g$, then the geodesic length function $\ell_\Omega$ attains a minimum in Teichmüller space, and this minimum is unique \cite{KSP}. The explicit calculation of the minima for uniform filling (allowing arbitrary intersection degrees) was recently achieved using the algebraic and combinatorial machinery of \textit{dessins d'enfants} and Grothendieck-Belyi surfaces \cite{GEGGHR}. By contrast, our focus is specifically on $4$-regular topological fillings.\\ 
 
  In this article, we provide an elementary approach to these calculations through direct optimization techniques, bypassing the heavy algebraic machinery used in previous literature.  Furthermore, An, Saha, and Sanki classified all minimal filling pairs in genus two and determined the shortest minimal filling pair on $S_2$ \cite{Bhola26}. In this article, we study the minima of geodesic length functions for both uniform and non-uniform fillings on $S_g$. The article is structured as follows:

\begin{enumerate}[1.]
\item \textit{Preliminaries:} We introduce the necessary background, notations, and definitions.

\item \textit{Optimization techniques:} We establish Proposition \ref{Proposition} and Theorem \ref{Uniform case}, which form the core optimization framework for the article.

\item \textit{Uniform Case:} We apply this framework to compute the minima for uniform $4$-regular fillings on $S_g$, providing a simpler and more elementary approach than \cite{GEGGHR}.

\item \textit{Non-Uniform Case:} We extend these techniques to two special classes of non-uniform $4$-regular fillings. Using fat graphs, we explicitly compute their minima and prove they are attained at a triangle surface.
\end{enumerate}

\section{Preliminaries}
In this section, we define Teichmüller space as the space of marked hyperbolic surfaces and formally define geodesic length functions. Further, we present hyperbolic trigonometry formulas for a right angled hyperbolic triangle and include expressions for area and perimeter, which are essential for proving our desired results.

\begin{defn}
    A marking on $S_g$ is a pair $(X, f)$, where $X$ is a closed hyperbolic surface and $f : S_g \to X$ is an orientation-preserving homeomorphism.  
    Two markings $(X_1, f_1)$ and $(X_2, f_2)$ are called equivalent if there exists an isometry $\phi : X_1 \to X_2$ such that  
    $f_2^{-1} \circ \phi \circ f_1$ is isotopic to the identity on $S_g$.
\end{defn}

We denote the equivalence class of $(X, f)$ by $[(X, f)]$. It consists of all markings equivalent to $(X, f)$.

\begin{defn}
    The Teichmüller space of $S_g$, denoted $\mathcal{T}_g$, is the set of all such equivalence classes of markings. That is,
    \[
    \mathcal{T}_g = \{\, [(X, f)] \mid (X, f) \text{ is a marking on } S_g \,\}. 
    \]
\end{defn}

Next, we provide a formal definition of the geodesic length function.
\begin{defn}
 For a fixed homotopically nontrivial curve $\gamma$ on $S_g$, the geodesic length function is the map
\[
\ell_\gamma : \mathcal{T}_g \to (0, \infty),
\]
which sends a marked hyperbolic surface $[(f, X)]$ to the length of the unique closed geodesic on $X$ in the homotopy class of $f(\gamma)$. For a finite collection of curves $\Omega = \{\gamma_1, \dots, \gamma_n\}$, the corresponding geodesic length function $\ell_\Omega : \mathcal{T}_g \to (0, \infty)$ is defined as the sum
\[
\ell_\Omega := \sum_{i=1}^n \ell_{\gamma_i}.
\]
    
\end{defn}

Now we present hyperbolic trigonometry formulae for a right-angled hyperbolic triangle in the following lemma.

\begin{lemma}\label{sine and cosine rules} \cite{PB}
Let \( ABC \) be a hyperbolic triangle with side lengths \( a, b, c \), where the side of length \( a \) is opposite to angle \( A \) and there is a right angle at \( A \) (see Figure \ref{RightAngledTriangle}). Then, the following relations hold:

\noindent%
\begin{minipage}{0.5\textwidth}
\begin{enumerate}
\item[(i)]  \(\cosh a=\cosh b \cosh c \),  
\item[(ii)]  \(\cosh a= \cot B \cot C\),
\item[(iii)]  \(\sinh b= \sin B \sinh a\),  
\item[(iv)]  \(\sinh c=\cot B \tanh b\),  
\item[(v)]  \(\cos C =\cosh c \sin B\),  
\item[(vi)]  \(\cos B =\tanh c \coth a\).  
\end{enumerate}
\end{minipage}%
\begin{minipage}{0.3\textwidth}
\hfill%
\begin{figure}[H]
    \centering
    \includegraphics[width=0.7\linewidth]{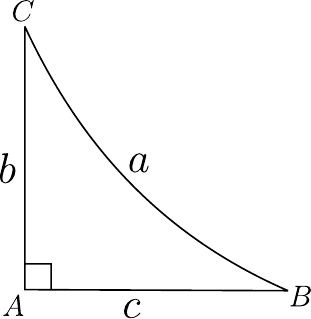}
    \caption{}
    \label{RightAngledTriangle}
\end{figure}
\end{minipage}
\end{lemma}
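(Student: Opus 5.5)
I would prove the six identities of Lemma \ref{sine and cosine rules} in the hyperboloid (Minkowski) model, and derive everything from two basic laws plus algebra. Place the right angle $A$ at the point $(0,0,1)$ of $\mathbb{H}^2=\{x_1^2+x_2^2-x_3^2=-1,\ x_3>0\}$. Take the two legs along the $x_1$- and $x_2$-axis geodesics, so that
\[
B=(\sinh c,0,\cosh c),\qquad C=(0,\sinh b,\cosh b).
\]
The distance formula $\cosh d(P,Q)=-\langle P,Q\rangle$ then gives $\cosh a=\cosh b\cosh c$ immediately, which is (i).

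Next I will compute the angle at $B$. Its tangent vectors are obtained by projecting $A$ and $C$ onto $T_B\mathbb{H}^2$ via $v\mapsto v+\langle v,B\rangle B$ and normalizing. This yields $\cos B=\tanh c\coth a$, which is (vi), and symmetrically $\cos C=\tanh b\coth a$.

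The remaining identities follow from (i), (vi) and its symmetric form by algebra.

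\textbf{(iii).} Compute $\sin^2B=1-\tanh^2c\coth^2a$. Substitute $\cosh a=\cosh b\cosh c$ and simplify to $\sinh^2b/\sinh^2a$. This gives $\sinh b=\sin B\sinh a$, and symmetrically $\sinh c=\sin C\sinh a$.

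\textbf{(iv).} Dividing, $\cot B=\cos B/\sin B=\tanh c\coth a\cdot\sinh a/\sinh b=\tanh c\cosh a/\sinh b$. Using (i), this reduces to $\sinh c/\tanh b$, i.e.\ $\sinh c=\cot B\tanh b$.

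\textbf{(v).} We have $\cosh c\sin B=\cosh c\sinh b/\sinh a$. Compare this with $\cos C=\tanh b\coth a=\sinh b\cosh a/(\cosh b\sinh a)$ and use $\cosh a/\cosh b=\cosh c$. The two expressions agree.

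\textbf{(ii).} Multiply the formula for $\cot B$ by the symmetric formula $\cot C=\tanh b/\sinh c$. This gives $\tanh c\tanh b/(\tanh b\tanh c)\cdot\cosh b\cosh c=\cosh b\cosh c=\cosh a$.

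The only genuinely non-algebraic step is the angle computation that gives (vi). That step needs care with the Minkowski projection and the normalization of tangent vectors. Alternatively, I would simply cite the standard reference \cite{PB}, as the paper does, since these are classical formulas. The simplification of $\sin^2 B$ in (iii) is the one step where a sign or hyperbolic identity slip is easiest, so I would verify it using $\cosh^2-\sinh^2=1$ explicitly.
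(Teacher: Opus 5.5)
Your derivation is correct. It takes a different route from the paper only in the trivial sense that the paper gives no argument: Lemma~\ref{sine and cosine rules} is simply quoted from \cite{PB}.

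I checked the hyperboloid computation. With $u_A=A-\cosh c\,B$ and $u_C=C-\cosh a\,B$ one gets $\langle u_A,u_A\rangle=\sinh^2 c$, $\langle u_C,u_C\rangle=\sinh^2 a$ and $\langle u_A,u_C\rangle=\cosh a\cosh c-\cosh b$. This is the hyperbolic law of cosines at $B$, and substituting (i) reduces it to $\cos B=\tanh c\coth a$. Your algebra for (iii)--(v) is also right. In (iii) the numerator of $\sin^2 B$ is $\cosh^2 a-\cosh^2 c=\cosh^2 c\,\sinh^2 b$, and the positive root is forced because $0<B<\pi$.

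What your route buys over the citation is a self-contained proof. It also shows that the only geometric input is (i) together with the cosine law at one vertex; the other four identities are formal consequences. The citation buys brevity for classical facts, which is all the paper needs.

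Two small fixes for a write-up:
\begin{itemize}
\item Say explicitly that placing $A$ at $(0,0,1)$ with the legs along the coordinate geodesics is without loss of generality. This holds because the full isometry group of $\mathbb{H}^2$ acts transitively on points together with orthonormal tangent frames.
\item In (ii) the symmetric formula should read $\cot C=\sinh b/\tanh c$, the $b\leftrightarrow c$ mirror of (iv), not $\tanh b/\sinh c$. Multiplying $\cot B=\sinh c/\tanh b$ by the latter would give $1$. Your displayed product $\tanh b\tanh c\cosh b\cosh c/(\tanh b\tanh c)$ is what the correct formula yields, so this is only a typo.
\end{itemize}
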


Next, we present the expression for the perimeter of a regular $n$-gon in terms of its angle, and we state the Gauss--Bonnet theorem.

\begin{prop}
    Let $P$ be a regular hyperbolic $n$-gon with interior angle $\theta$. The perimeter of $P$ is given by
    \(
    \text{Peri}(P) = 2n \cosh^{-1}\left( \frac{\cos(\pi/n)}{\sin\left( \theta/2 \right)} \right).
    \)
\end{prop}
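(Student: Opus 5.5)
Decompose the regular $n$-gon $P$ into $2n$ congruent right-angled triangles and apply Lemma \ref{sine and cosine rules} to one of them.

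Let $O$ be the center of $P$, which exists by regularity: the rotation group of order $n$ fixes a unique point. Join $O$ to each vertex and drop the perpendicular from $O$ to each side. The foot of each perpendicular is the midpoint of that side, since the reflection in the line through $O$ and the midpoint is a symmetry of $P$. This cuts $P$ into $2n$ congruent triangles. A typical one has vertices $O$, a vertex $V$ of $P$, and the midpoint $M$ of a side through $V$.

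In this triangle the angle at $M$ is $\pi/2$. The angle at $O$ is $2\pi/(2n)=\pi/n$, and the angle at $V$ is $\theta/2$, because $OV$ bisects the interior angle by symmetry. The leg $VM$ has length $s/2$, where $s$ is the side length. Hence $\text{Peri}(P)=ns=2n\,|VM|$.

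Now apply Lemma \ref{sine and cosine rules}(v) with the right angle at $A=M$, $C=O$ and $B=V$. The side $c$ is opposite $C=O$, so $c=|VM|$. The relation $\cos C=\cosh c\,\sin B$ then reads
\[
\cos(\pi/n)=\cosh|VM|\,\sin(\theta/2).
\]
Solving gives $|VM|=\cosh^{-1}\!\left(\cos(\pi/n)/\sin(\theta/2)\right)$, and multiplying by $2n$ yields the stated formula.

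The main point to check is bookkeeping: matching the labels in Figure \ref{RightAngledTriangle} so that the correct leg is opposite the angle $\pi/n$. The existence of a hyperbolic regular $n$-gon requires $\theta<(n-2)\pi/n$. This gives $\theta/2+\pi/n<\pi/2$, so $\sin(\theta/2)<\cos(\pi/n)$ and the argument of $\cosh^{-1}$ exceeds $1$, as it should.
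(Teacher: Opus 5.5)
Your proof is correct and takes the same route as the paper. Both cut $P$ into $2n$ right triangles with angle $\pi/n$ at the center and $\theta/2$ at a vertex, then apply relation (v) of Lemma \ref{sine and cosine rules} to get $\cos(\pi/n)=\cosh(s/2)\sin(\theta/2)$ for the half-side $s/2$. Your bookkeeping is cleaner than the paper's, whose proof calls the side length $2\ell$ but then uses $\ell/2$ as the half-side and $n\ell$ as the perimeter.
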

\begin{proof}

\begin{figure}[htbp]
    \centering
    \includegraphics[width=0.3\linewidth]{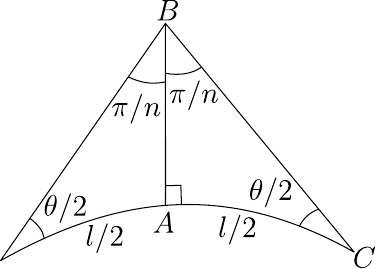}
    \caption{A triangular section of regular hyperbolic $n$-gon.}
    \label{Perimeter}
\end{figure}

Let \( B \) be the circumcenter of the polygon \( P \) and the length of each side of \( P \) is \( 2\ell \). The perpendicular projection of \( B \) onto any side bisects both the side and the angle at \( B \) (see Figure \ref{Perimeter}).

 By Lemma \ref{sine and cosine rules}, we have
   \(
   \frac{\ell}{2} = \cosh^{-1}\left( \frac{\cos(\pi/n)}{\sin(\theta/2)} \right).
   \)
   Therefore, the perimeter of $P$ is given by
   \[
   \text{Peri}(P) = n\ell = 2n \cosh^{-1}\left( \frac{\cos(\pi/n)}{\sin (\theta/2)} \right).
   \]
\end{proof}
\begin{thm}[Gauss- Bonnet]\label{Gauss-Bonnet}
The area of a hyperbolic \( n \)-gon \( P \) with interior angles \( \theta_1, \dots, \theta_n \) is given by the formula:  
\(
\text{Area}(P) = (n - 2)\pi - (\theta_1 + \dots + \theta_n)
\).
\end{thm}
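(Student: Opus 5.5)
The plan is to prove the Gauss--Bonnet formula for a hyperbolic $n$-gon by reducing to triangles. I read the polygon as convex, which is the situation in all later applications: tangential and regular polygons. I would first establish the triangle case $n=3$ and then triangulate.

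\textbf{Triangle case.} I would use the standard ideal-triangle argument in the upper half-plane model $\mathbb{H}^2$, with area form $\frac{dx\,dy}{y^2}$.

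First, I compute the area of a triangle with one ideal vertex at $\infty$ and finite vertices on the unit circle at angles $\alpha,\beta$. Its sides are the vertical lines $x=\cos(\pi-\alpha)$ and $x=\cos\beta$ together with the arc $x^2+y^2=1$. Integrating $\int\int_{\sqrt{1-x^2}}^{\infty} y^{-2}\,dy\,dx=\int \frac{dx}{\sqrt{1-x^2}}$ and substituting $x=\cos t$ gives area $\pi-\alpha-\beta$. This formula is invariant under isometries, since M\"obius transformations preserve area and angles. So every triangle with exactly one ideal vertex and finite angles $\alpha,\beta$ has area $\pi-\alpha-\beta$.

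For a compact triangle $ABC$, I extend the side $AB$ beyond $B$ to an ideal point $\xi$. Then the triangle $AC\xi$ is the union of $ABC$ and $BC\xi$. The angles of $AC\xi$ are $A$ and $C+\gamma$, where $\gamma$ is the angle of $BC\xi$ at $C$. The angles of $BC\xi$ are $\pi-B$ and $\gamma$. Subtracting the two areas gives
\[
\text{Area}(ABC)=\bigl(\pi-A-C-\gamma\bigr)-\bigl(\pi-(\pi-B)-\gamma\bigr)=\pi-A-B-C.
\]

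\textbf{General $n$-gon.} For convex $P$, I triangulate from one vertex $v_1$ by the diagonals $v_1v_k$, producing $n-2$ triangles. Convexity ensures the diagonals lie inside $P$, so the interior angles of the triangles partition the interior angles of $P$ exactly. Summing the triangle formula over the $n-2$ triangles gives $(n-2)\pi-\sum_i\theta_i$, which is the claim.

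If non-convex polygons must also be covered, I would use the fact that every simple polygon has an interior diagonal, which lets the induction proceed. Alternatively, one can apply the Gauss--Bonnet theorem for smooth surfaces with boundary: curvature $-1$, geodesic sides, and exterior angles $\pi-\theta_i$ give $-\text{Area}+\sum(\pi-\theta_i)=2\pi$.

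The main obstacle is the angle bookkeeping in the triangle decomposition. In particular, in the step that extends a side to an ideal point, one has to check that the angle at $B$ in $BC\xi$ really is $\pi-B$ and that the regions fit together without overlap. Everything else is routine integration.
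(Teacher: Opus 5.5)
Your proposal is correct: the integral for a triangle with one ideal vertex, the subtraction $\text{Area}(AC\xi)-\text{Area}(BC\xi)$ with the angles $C+\gamma$ and $\pi-B$, and the fan triangulation of a convex $n$-gon all check out, and your closing remark handles non-convex polygons as well. The paper gives no argument of its own here and simply cites \cite{AFB}; what you wrote is the standard textbook proof of this formula, so it is essentially the same approach.
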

\begin{proof}
    See \cite{AFB}
\end{proof}

\section{Optimization techniques}
In this section, we prove the following proposition using the method of Lagrange multipliers, and we establish Theorem \ref{Uniform case}.

\begin{prop}\label{Proposition}
Let $n \geq 3$ be an integer. Consider the function 
\[
F(\theta_1, \theta_2) = \cosh^{-1}\left( \frac{\cos(\pi/n)}{\sin(\theta_1/2)} \right) + \cosh^{-1}\left( \frac{\cos(\pi/n)}{\sin(\theta_2/2)} \right),
\]
defined for $(\theta_1, \theta_2) \in (0, \tfrac{(n-2)\pi}{n}) \times (0, \tfrac{(n-2)\pi}{n})$. For a fixed $r \in (0, \tfrac{2(n-2)\pi}{n})$, consider minimizing $F$ subject to the constraint $\theta_1 + \theta_2 = r$. If a minimum exists, then it must occur when $\theta_1 = \theta_2 = r/2$.
\end{prop}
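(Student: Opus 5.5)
The plan is to reduce the constrained problem to a one-variable function and show it is strictly convex. Set $c=\cos(\pi/n)$ and
\[
f(\theta)=\cosh^{-1}\!\left(\frac{c}{\sin(\theta/2)}\right),\qquad 0<\theta<\tfrac{(n-2)\pi}{n}.
\]
On this range $\theta/2<\pi/2-\pi/n$, so $\sin(\theta/2)<\cos(\pi/n)$ and the argument is strictly greater than $1$. Hence $f$ is smooth there. Then $F(\theta_1,\theta_2)=f(\theta_1)+f(\theta_2)$.

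\textbf{Lagrange step.} Under the constraint $\theta_1+\theta_2=r$, the Lagrange condition is $f'(\theta_1)=f'(\theta_2)=\lambda$. Since the domain is open, any minimum is an interior critical point of the constrained problem, so this condition must hold. If $f'$ is strictly monotone, then $f'(\theta_1)=f'(\theta_2)$ forces $\theta_1=\theta_2=r/2$, which is the claim.

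\textbf{Computing $f'$.} Write $u=\sin(\theta/2)$ and $x=c/u$. By the chain rule,
\[
f'(\theta)=\frac{1}{\sqrt{x^2-1}}\cdot\left(-\frac{c\cos(\theta/2)}{2\sin^2(\theta/2)}\right)
=-\frac{c\cos(\theta/2)}{2\sin(\theta/2)\sqrt{c^2-\sin^2(\theta/2)}}.
\]
Now substitute $s=\sin^2(\theta/2)$, which is increasing in $\theta$ on the domain. This gives
\[
f'(\theta)=-\frac{c}{2}\sqrt{\frac{1-s}{s\,(c^2-s)}}.
\]

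\textbf{Main obstacle: strict monotonicity of $f'$.} It suffices to show that $g(s)=\frac{1-s}{s(c^2-s)}$ is strictly increasing on $(0,c^2)$, so that $f'$ is strictly decreasing. Equivalently, $1/g(s)=\frac{s(c^2-s)}{1-s}$ should be strictly decreasing. Differentiating, the sign of its derivative is the sign of
\[
(c^2-2s)(1-s)+s(c^2-s)=c^2-2s+s^2 .
\]
This quantity is positive near $s=0$, so $1/g$ is actually increasing there and $g$ is decreasing. I therefore expect the sign to come out opposite to my first guess: $f'$ should be strictly \emph{increasing}, meaning $f$ is convex.

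As a sanity check, $f\to\infty$ as $\theta\to0$ and $f\to0$ as $\theta\to(n-2)\pi/n$, so $f$ decreases from $\infty$, consistent with convexity. I will recheck the sign of $c^2-2s+s^2$ across the whole interval $(0,c^2)$. At $s=c^2$ it equals $c^2(c^2-1)<0$, so the sign does change, and monotonicity on the full interval is exactly the delicate point.

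\textbf{Fallback if $f'$ is not monotone.} I will use a symmetry argument instead. If $f'(\theta_1)=f'(\theta_2)$ with $\theta_1\neq\theta_2$, I will compare $F$ at that point with $F(r/2,r/2)$ using convexity of $f$ on the region where it holds. Alternatively, I will change variables to $t=\tanh(\text{half side})$ and obtain convexity there. Establishing this monotonicity/convexity over the full admissible range is the key step I expect to be hardest.
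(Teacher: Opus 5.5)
Your Lagrange setup is correct, but the step everything rests on is false: $f'$ is not monotone on $(0,(n-2)\pi/n)$. Your own computation shows this. The quantity $c^2-2s+s^2$ vanishes at $s^*=1-\sin(\pi/n)$, which lies in $(0,c^2)$. So $f$ is strictly convex for $\sin^2(\theta/2)<1-\sin(\pi/n)$ and strictly concave beyond that point. Near the right endpoint $f$ behaves like a constant times $\sqrt{(n-2)\pi/n-\theta}$; the fact that $f$ decreases from $\infty$ to $0$ says nothing about convexity.

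Consequently $f'(\theta_1)=f'(\theta_2)$, $\theta_1+\theta_2=r$, has genuine asymmetric solutions. For example, with $n=6$ take $\theta_1/2=20^\circ$ and $\theta_2/2\approx 57.85^\circ$.

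Neither fallback repairs this as stated. An asymmetric critical point always has one angle strictly on each side of the inflection point, so ``convexity where it holds'' never applies to both angles at once. Nor can any reparametrization (e.g.\ $\tanh$ of the half-side) give the global inequality $F\ge F(r/2,r/2)$ on the constraint line. That inequality is false whenever $\sin^2(r/4)>1-\sin(\pi/n)$: there $f''(r/2)<0$, so $t\mapsto f(t)+f(r-t)$ has a strict local \emph{maximum} at $t=r/2$. For such $r$ the proposition is true only because no interior minimum exists, so any proof must work through the critical points.

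The missing step is to classify the asymmetric critical points and evaluate $F$ there; this is what the paper does. Both sides of the Lagrange equation are positive, so squaring is reversible. Put $a=\sin^2(\theta_1/2)$, $b=\sin^2(\theta_2/2)$ and $S=\sin(\pi/n)$. The squared equation then factors as
\[
(b-a)\bigl(\cos^2(\theta_1/2)\cos^2(\theta_2/2)-S^2\bigr)=0 .
\]
Hence an asymmetric critical point satisfies $\cos(\theta_1/2)\cos(\theta_2/2)=S$, and therefore $\sin(\theta_1/2)\sin(\theta_2/2)=S-\cos(r/2)>0$.

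On this locus the addition formula for $\cosh$ gives $\cosh F=\frac{1-S\cos(r/2)}{S-\cos(r/2)}$. At the symmetric point, $\cosh F(r/2,r/2)=\frac{3-4S^2+\cos(r/2)}{1-\cos(r/2)}$. Their difference is
\[
\frac{(1+S)(\cos(r/2)-2S+1)^2}{(S-\cos(r/2))(1-\cos(r/2))}\ge 0 .
\]
Equality forces $\cos((\theta_1-\theta_2)/2)=2S-\cos(r/2)=1$, i.e.\ $\theta_1=\theta_2$. So every asymmetric critical point has strictly larger $F$ than the symmetric point and cannot be the minimum. Without this computation, or an equivalent argument, your proposal does not prove the statement.
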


\begin{proof}
Assume the minimum exists. We use the Lagrange multiplier method with the constraint function
\[
g(\theta_1, \theta_2) = \theta_1 + \theta_2 - r.
\]
At the minimum, there exists $\lambda \in \mathbb{R}$ such that $\nabla F = \lambda \nabla g$. This gives the system
\[
\frac{\partial F}{\partial \theta_1} = \lambda, \quad \frac{\partial F}{\partial \theta_2} = \lambda, \quad \theta_1 + \theta_2 = r
\]
\[
\implies
\frac{\partial F}{\partial \theta_1} = \frac{\partial F}{\partial \theta_2}.
\]
Evaluating the partial derivatives yields
\begin{align*}
-\frac{\cos(\pi/n) \cos(\theta_1/2)}{2 \sin(\theta_1/2) \sqrt{\cos^2(\pi/n) - \sin^2(\theta_1/2)}} &= -\frac{\cos(\pi/n) \cos(\theta_2/2)}{2 \sin(\theta_2/2) \sqrt{\cos^2(\pi/n) - \sin^2(\theta_2/2)}} \\[1ex]
\implies \frac{\cos(\theta_1/2)}{\sin(\theta_1/2) \sqrt{\cos^2(\pi/n) - \sin^2(\theta_1/2)}} &= \frac{\cos(\theta_2/2)}{\sin(\theta_2/2) \sqrt{\cos^2(\pi/n) - \sin^2(\theta_2/2)}} \tag{1} \\[1ex]
\implies \cos^2(\theta_1/2) \sin^2(\theta_2/2) \left( \cos^2(\pi/n) - \sin^2(\theta_2/2) \right) &= \cos^2(\theta_2/2) \sin^2(\theta_1/2) \left( \cos^2(\pi/n) - \sin^2(\theta_1/2) \right).
\end{align*}

Using the identity $2\cos A \cos B=\sin(A+B)-\sin(A-B)$ and the constraint $\theta_1 + \theta_2 = r$,
\[
\begin{aligned}
\implies \sin\left( r/2 \right) \sin\left( (\theta_1 - \theta_2)/2 \right) \Big[ &\sin^2\left( r/2 \right) + \sin^2\left( (\theta_1 - \theta_2)/2 \right) \\
&+ 2\sin^2\left( \theta_1/2 \right) + 2\sin^2\left( \theta_2/2 \right) - 4\cos^2\left( \pi/n \right) \Big] = 0.
\end{aligned}
\]
Since $r/2<\pi/2$, we know $\sin(r/2) \neq 0$, leaving us with two cases.

\vspace{0.5em}
\noindent \textbf{Case 1:} $\sin\left( (\theta_1 - \theta_2)/2 \right) = 0 \implies \theta_1 = \theta_2 = r/2$.

\vspace{0.5em} 
\noindent \textbf{Case 2:} \quad $\displaystyle \sin^2\left( r/2 \right) + \sin^2\left( (\theta_1 - \theta_2)/2 \right) + 2\sin^2\left( \theta_1/2 \right) + 2\sin^2\left( \theta_2/2 \right) - 4\cos^2\left( \pi/n \right) = 0$ \hfill (2)
\[
\implies \left[ \sin\left( r/2 \right) + \sin\left( (\theta_1 - \theta_2)/2 \right) \right]^2 = 4\left( \cos^2\left( \pi/n \right) - \sin^2\left( \theta_2/2 \right) \right) \tag{3}
\]
\[
\implies \cos\left( \theta_1/2 \right) \cos\left( \theta_2/2 \right) = \sin\left( \pi/n \right). \tag{4}
\]
Let $X=\cosh^{-1}\left(\cos(\pi/n)/\sin(\theta_1/2)\right)$ and $Y=\cosh^{-1}\left(\cos(\pi/n)/\sin(\theta_2/2)\right)$.
\[
\cosh(F(\theta_1, \theta_2)) = \cosh(X + Y) = \cosh X \cosh Y + \sinh X \sinh Y
\]
\[
\implies \cosh(F(\theta_1, \theta_2)) = \frac{\cos^2(\pi/n) + \sqrt{ \left( \cos^2(\pi/n) - \sin^2(\theta_1/2) \right) \left( \cos^2(\pi/n) - \sin^2(\theta_2/2) \right) }}{\sin(\theta_1/2) \sin(\theta_2/2)}. \tag{5}
\]
To simplify further, let $C = \cos(\pi/n)$, $S = \sin(\pi/n)$, $A = \theta_1/2$, and $B = \theta_2/2$. Then $A + B = r/2$, and from (4) we have $\cos A \cos B = S$.
\[
\cos^2(\pi/n) - \sin^2 A = C^2 - \sin^2 A = \cos^2 A - (1 - C^2) = \cos^2 A - S^2
\]
\[
\cos^2(\pi/n) - \sin^2 B = \cos^2 B - S^2
\]
\[
\implies (\cos^2 A - S^2)(\cos^2 B - S^2) = \cos^2 A \cos^2 B - S^2(\cos^2 A + \cos^2 B) + S^4.
\]
Using $\cos A \cos B = S$, we get $\cos^2 A \cos^2 B = S^2$ and $\cos^2 A + \cos^2 B = 1 + \cos(r/2) \cos(A-B)$.
\[
\implies (\cos^2 A - S^2)(\cos^2 B - S^2) = -S^2 \cos(r/2) \cos(A-B) + S^4. \tag{6}
\]
We know $\cos(A - B) = \cos A \cos B + \sin A \sin B = S + \sin A \sin B$ and $\cos(A + B) = S - \sin A \sin B \implies \sin A \sin B = S - \cos(r/2)$.
\[
\implies \cos(A - B) = 2S - \cos(r/2).
\]
Substituting into (6)
\[
(\cos^2 A - S^2)(\cos^2 B - S^2) = -S^2 \cos(r/2)(2S - \cos(r/2)) + S^4 = S^2 (\cos(r/2) - S)^2.
\]
Taking the square root with $\sin A \sin B = S - \cos(r/2) > 0$
\[
\implies \sqrt{(\cos^2 A - S^2)(\cos^2 B - S^2)} = S (S - \cos(r/2)).
\]
Substituting this into (5)
\[
\cosh(F) = \frac{C^2 + S(S - \cos(r/2))}{S - \cos(r/2)} = \frac{1 - S \cos(r/2)}{S - \cos(r/2)}. \tag{7}
\]
Evaluating at $\theta_1 = \theta_2 = r/2$
\[
F(r/2, r/2) = 2 \cosh^{-1}\left( \frac{\cos(\pi/n)}{\sin(r/4)} \right).
\]
Using $\cosh(2x)=2\cosh^2x-1$ and $\sin^2(r/4) = \frac{1 - \cos(r/2)}{2}$
\[
\implies \cosh(F(r/2, r/2)) = \frac{4 \cos^2(\pi/n)}{1 - \cos(r/2)} - 1 = \frac{3 - 4S^2 + \cos(r/2)}{1 - \cos(r/2)}. \tag{8}
\]
Let $C' = \cos(r/2)$. Considering the difference $D = \cosh(F(\theta_1, \theta_2)) - \cosh(F(r/2, r/2))$
\[
D = \frac{1 - SC'}{S - C'} - \frac{3 - 4S^2 + C'}{1 - C'} = \frac{(1 - SC')(1 - C') - (3 - 4S^2 + C')(S - C')}{(S - C')(1 - C')}.
\]
Let $N$ be the numerator
\[
N = 1 - C' - SC' + SC'^2 - 3S + 4S^3 - SC' + 3C' - 4S^2C' + C'^2
\]
\[
\implies N = (S + 1)C'^2 + (2 - 2S - 4S^2)C' + (1 - 3S + 4S^3) \tag{9}
\]
\[
\implies N = (S + 1)\left[ C'^2 + (-4S + 2)C' + (2S - 1)^2 \right] = (S + 1)\left( C' - (2S - 1) \right)^2
\]
\[
\implies D = \frac{(S + 1)(C' - (2S - 1))^2}{(S - C')(1 - C')}.
\]
Since $S + 1 > 0$, $(C' - (2S - 1))^2 \geq 0$, $S - C' > 0$, and $1 - C' > 0$, we have $D \geq 0$.
\[
\implies \cosh(F(\theta_1, \theta_2)) \geq \cosh(F(r/2, r/2))
\]
\[
\implies F(\theta_1, \theta_2) \geq F(r/2, r/2).
\]
Equality holds when $C' = 2S - 1$, which corresponds to $\theta_1 = \theta_2 = r/2$.

In Case 1, we have $\theta_1 = \theta_2 = r/2$. In Case 2, we have shown that $F(\theta_1, \theta_2) \geq F(r/2, r/2)$. Therefore, the minimum value of $F$ subject to $\theta_1 + \theta_2 = r$ is achieved at $\theta_1 = \theta_2 = r/2$.
\end{proof}

Utilizing the proposition \ref{Proposition} we prove the following theorem.

\begin{thm}\label{Uniform case}
Let $k,g\geq 2$ be integers such that $k$ divides $8g-8$ and let $n=(8g-8+4k)/k$. Let \( P_1, \dots, P_k \) be hyperbolic \(n\)-gons. Suppose  
 \[ \sum_{i=1}^{k} \text{Area}(P_i) = 2\pi(g - 2) \]  
for some integer \( g \geq 2 \). Then,  
 \[ \sum_{i=1}^{k} \text{Peri}(P_i) \geq 2nk \cosh^{-1}\left(\sqrt{2} \cos (\pi/n)\right), \] 
with equality when all $P_i$ are isometric to a regular right-angled $n$-gon.
\end{thm}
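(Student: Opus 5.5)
The hypothesis on the total area appears to contain a typo. If each $P_i$ is a regular right-angled $n$-gon, Theorem~\ref{Gauss-Bonnet} gives
\[
\sum_{i=1}^k \text{Area}(P_i)=k\Bigl((n-2)\pi-\tfrac{n\pi}{2}\Bigr)=k\Bigl(\tfrac n2-2\Bigr)\pi=4\pi(g-1)=2\pi(2g-2).
\]
So I will read the hypothesis as $\sum_i \text{Area}(P_i)=2\pi(2g-2)$, which is the area of $S_g$. This is what makes the equality case consistent.

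The proof reduces the problem in two stages: first from arbitrary $n$-gons to regular ones, then from regular ones with different angles to regular ones with a common angle.

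\textbf{Step 1 (reduction to regular polygons).} Let $P_i$ have area $A_i$. I would invoke the discrete isoperimetric inequality in the hyperbolic plane: among all $n$-gons of a fixed area, the regular one has least perimeter. If this cannot be cited, I would prove it in two moves.
\begin{enumerate}[(a)]
\item With the angle sum, and hence the area, held fixed, the perimeter is minimized by a tangential polygon.
\item For a tangential polygon, the perimeter splits into contributions of the angles through the right-triangle relations of Lemma~\ref{sine and cosine rules}. Proposition~\ref{Proposition} then shows that replacing two unequal angles by their average does not increase the perimeter. Iterating this equalization, together with a compactness argument guaranteeing that a minimizer exists, leads to the regular polygon.
\end{enumerate}
Either way, $\text{Peri}(P_i)\ge 2n\,h(\theta_i)$, where
\[
h(\theta)=\cosh^{-1}\!\Bigl(\frac{\cos(\pi/n)}{\sin(\theta/2)}\Bigr),
\qquad
\theta_i=\frac{(n-2)\pi-A_i}{n}
\]
is the angle of the regular $n$-gon of area $A_i$.

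\textbf{Step 2 (equalizing the angles).} The constraint $\sum_i A_i=4\pi(g-1)$ becomes
\[
\sum_{i=1}^k \theta_i=\frac{k(n-2)\pi-4\pi(g-1)}{n}=\frac{k\pi}{2},
\]
using $nk=8g-8+4k$. It then suffices to show
\[
\sum_{i=1}^k h(\theta_i)\ge k\,h(\pi/2).
\]
I would do this by proving that $h$ is convex on $(0,(n-2)\pi/n)$ and applying Jensen's inequality. The sign of $h''$ can be checked directly from
\[
h'(\theta)=-\frac{\cos(\pi/n)\cos(\theta/2)}{2\sin(\theta/2)\sqrt{\cos^2(\pi/n)-\sin^2(\theta/2)}},
\]
which is negative and increasing in $\theta$. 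Alternatively, Proposition~\ref{Proposition} applied pairwise gives the same conclusion once existence of a minimizer on the compact closure is secured; note that $h\to\infty$ as $\theta\to0$, and at the other end the polygon degenerates to area zero.

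\textbf{Conclusion and equality.} Finally $h(\pi/2)=\cosh^{-1}(\sqrt2\cos(\pi/n))$, which gives the stated bound. Equality holds exactly when every $P_i$ is regular with angle $\pi/2$.

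\textbf{Main obstacle.} The hard part is Step 1, the polygonal isoperimetric reduction. Proposition~\ref{Proposition} only handles two angles of already regular-type contributions, so the passage from an arbitrary $n$-gon to a tangential and then regular one must be justified separately. I would first try to cite the known result; failing that, I would carry out the tangential-polygon argument in (a) and (b) above.
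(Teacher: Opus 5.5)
Your reading of the area hypothesis as $2\pi(2g-2)$ is correct, and Step~1 matches the paper, which simply cites Bezdek. The genuine gap is in Step~2, and it cannot be closed.

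\textbf{The convexity claim is false.} You assert that $h$ is convex on $(0,(n-2)\pi/n)$, i.e.\ that $h'$ is increasing. But $\cosh^{-1}x\approx\sqrt{2(x-1)}$ as $x\to1^+$. So near the right endpoint $h(\theta)$ behaves like a constant times $\sqrt{(n-2)\pi/n-\theta}$, and $h'\to-\infty$ at \emph{both} ends. Precisely, set $c=\cos(\pi/n)$ and $s=\sin^2(\theta/2)$. Then $(h')^2=\frac{c^2}{4}\cdot\frac{1-s}{s(c^2-s)}$, and its $s$-derivative has the sign of $-(s^2-2s+c^2)$. Hence $h$ is convex exactly where $\cos\theta\ge 2\sin(\pi/n)-1$ and concave beyond. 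This threshold is the value $C'=2S-1$ appearing in the proof of Proposition~\ref{Proposition}.

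For $n=5$ the angle $\pi/2$ lies strictly in the concave range, so equal angles are not even a local minimum. Take $g=2$, $k=8$, $n=5$, and eight regular pentagons: four of angle $\pi/2+0.2$ and four of angle $\pi/2-0.2$. The total area is $4\pi$. But $h(\pi/2+0.2)+h(\pi/2-0.2)\approx0.299+0.730=1.028<2h(\pi/2)\approx1.061$. So the total perimeter is about $41.1$, below the asserted bound $80\,h(\pi/2)\approx42.45$.

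\textbf{The fallback route fails too.} Your alternative (compactness plus pairwise use of Proposition~\ref{Proposition}) is exactly the paper's argument. It breaks at the point you mention but do not resolve. At the face $\theta_i=(n-2)\pi/n$, $h$ does not blow up; it tends to $0$, because the polygon shrinks to a point. So the minimizer on the compact closure may lie on that face, where the interior Lagrange analysis of Proposition~\ref{Proposition} says nothing. The paper's set $D$ imposes only $\theta_i\ge\epsilon$ and overlooks this face entirely; $f$ is not even defined on all of $D$.

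This failure occurs even when $\pi/2$ is in the convex range. For $g=3$, $k=4$, $n=8$, take three regular octagons of angle $5\pi/12+\epsilon/3$ and one of angle $3\pi/4-\epsilon$. Then $\sum h\to3h(5\pi/12)\approx2.934<4h(\pi/2)\approx3.057$.

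So the inequality fails for admissible parameters even with the corrected hypothesis. No sharper equalization argument can rescue it. A valid version must use more information, such as the vertex-angle and side-matching constraints of an actual filling.
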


\begin{proof}
Without loss of generality, we can assume all $P_i$ are regular \cite{BK}. Let $\theta_i$ be the interior angle of $P_i$. The perimeter of a regular hyperbolic $n$-gon is given by $2n\cosh^{-1}\left(\frac{\cos (\pi/n)}{\sin(\theta_i/2)}\right)$. By the Gauss-Bonnet theorem \ref{Gauss-Bonnet}, the condition $\sum_{i=1}^{k} \text{Area}(P_i) = 2\pi(g - 2)$ is equivalent to $\sum_{i=1}^{k} \theta_i=k\pi/2$.

Let $f(\theta) = \cosh^{-1}\left(\frac{\cos(\pi/n)}{\sin(\theta/2)}\right)$ and $F(\theta_1,\dots \theta_k) = \sum_{i=1}^{k} f(\theta_i)$. We seek to minimize $\sum_{i=1}^{k}f(\theta_i)$ subject to the constraint $\sum_{i=1}^{k} \theta_i=k\pi/2$.

Observe that $\theta_i \to 0 \implies f(\theta_i) \to \infty$. Thus, the minima cannot be attained in a neighborhood where any $\theta_i$ is close to $0$. There must exist $\epsilon > 0$ such that $\theta_i \geq \epsilon$ for all $1\leq i \leq k$, which implies $\theta_i\leq k\pi/2-(k-1)\epsilon$. We minimize $F$ over the domain $D=\{(\theta_1,\dots,\theta_k)\mid \epsilon \leq \theta_i \leq k\pi/2-(k-1)\epsilon \}$. Since $D$ is a compact set in $\mathbb{R}^k$ and $F$ is a continuous function, the minimum of $F$ exists.
 
Suppose the minimum is attained at $(\theta_1',\dots, \theta_k')$ where not all $\theta_i'$ are equal. Without loss of generality, assume $\theta_1'\neq \theta_2'$. Let $\theta_1'+ \theta_2'=r$, where $r=k\pi/2-\sum_{i=3}^{k}\theta_i'$. By Proposition \ref{Proposition}, we have
\[ 2f(r/2) + \sum_{i=3}^{k}f(\theta_i') < \sum_{i=1}^{k}f(\theta_i'), \]
contradicting that the minimum is attained at $(\theta_1',\dots, \theta_k')$. Thus, all $\theta_i$ must be equal. 

\[ \theta_i = \theta_j \implies \theta_i=\pi/2 \text{ for all } 1\leq i \leq k. \]
This means the minimum is attained when all $P_i$ are regular right-angled $n$-gons.
\end{proof}

\section{Uniform case}
In this section, as a direct application of Theorem \ref{Uniform case}, we provide an elementary calculation of the exact minima of the geodesic length functions for the uniform case. While these minima were recently computed in \cite{GEGGHR} using heavy algebraic machinery, our approach independently recovers these results for $4$-regular topological filling and identifies exactly where they are realized.

\begin{thm}
    Let \(\Omega = \{\gamma_1, \ldots, \gamma_d\}\) be a uniform filling on $S_g$. For a hyperbolic surface $X$, suppose \(S_g \setminus \Omega\) is a disjoint union of \(k\) regular \(n\)-gons. Then,
    \[ \ell_\Omega(X) \geq 2nk \cosh^{-1}\left(\sqrt{2} \cos(\pi/n)\right). \]
    Equality holds if and only if \(X\) is obtained by a side-pairing of \(k\) regular, right-angled hyperbolic \(n\)-gons.
\end{thm}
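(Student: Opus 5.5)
The plan is to reduce the statement to Theorem \ref{Uniform case} by cutting $X$ along the geodesic representatives of $\Omega$. Fix $X$ and realize each $\gamma_i$ by its closed geodesic on $X$. Since the curves are pairwise in minimal position, the geodesic representatives are also in minimal position. Hence the union $\Omega_X$ of these geodesics is a graph whose complementary regions are exactly the $k$ topological discs of $S_g\setminus\Omega$. Each such region is a hyperbolic $n$-gon $P_i$ with geodesic sides, and every edge of $\Omega_X$ appears exactly twice among the sides of the $P_i$. Therefore
\[
\ell_\Omega(X)=\tfrac12\sum_{i=1}^k \text{Peri}(P_i).
\]

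Next I identify the combinatorial data. The filling is $4$-regular (only transverse double points), so with $V$ vertices there are $E=2V$ edges. Euler's formula gives $V-2V+k=2-2g$, so $V=2g-2+k$. Counting sides, $nk=2E=4V$, which gives $n=(8g-8+4k)/k$. This is exactly the $n$ appearing in Theorem \ref{Uniform case}, and in particular $k\mid 8g-8$.

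For the area, Gauss--Bonnet applied to $X$ gives $\sum_i \text{Area}(P_i)=\text{Area}(X)=4\pi(g-1)$. Equivalently, the angles around each vertex sum to $2\pi$, so the total angle sum is $2\pi V$. Together with $n=4V/k$, this gives an average interior angle of $\pi/2$, which is the constraint $\sum_i\theta_i=k\pi/2$ used in the proof of Theorem \ref{Uniform case}. (The area hypothesis as printed in that theorem should read $4\pi(g-1)$; the proof actually uses only the angle constraint, so I will invoke the theorem in that form.) Under the present hypothesis each $P_i$ is regular, so Theorem \ref{Uniform case} applies directly: $\sum_i\text{Peri}(P_i)\ge 2nk\cosh^{-1}(\sqrt2\cos(\pi/n))$, with equality exactly when every $\theta_i=\pi/2$. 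I note that the factor $\tfrac12$ from the edge double-counting must be reconciled with the stated bound $2nk\cosh^{-1}(\cdots)$. The regular-polygon formula gives side length $2\cosh^{-1}(\cos(\pi/n)/\sin(\theta/2))$, so the total geodesic length is $\tfrac12\cdot k\cdot n\cdot 2\cosh^{-1}(\cdots)=nk\cosh^{-1}(\cdots)$. I will track this constant carefully against the paper's normalisation of $\text{Peri}$.

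For the equality case, suppose equality holds. Then every $P_i$ is a regular right-angled $n$-gon, and $X$ is recovered by gluing the $P_i$ along the side pairing determined by $\Omega$. Conversely, gluing $k$ regular right-angled $n$-gons according to the combinatorics of $\Omega$ makes the angles around each $4$-valent vertex sum to $2\pi$, so the result is a smooth hyperbolic surface. Because the angles are right, the sides continue straight through each vertex, so the curves become closed geodesics. Their total length is then the computed minimum, and by Kerckhoff's uniqueness this surface is the minimiser.

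The main obstacle is the claim that the complementary polygons may be taken regular. On a general $X$ the pieces need not be regular. Here the hypothesis grants regularity, but to conclude that $\ell_\Omega$ attains its minimum over all of $\mathcal T_g$ I would invoke \cite{BK}: among $n$-gons with a given area, the regular one minimises perimeter. Replacing each $P_i$ by the regular $n$-gon of the same area does not increase the perimeter and preserves the total area, so the bound holds in general.
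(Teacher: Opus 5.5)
Your route is the paper's: cut $X$ along the geodesic representatives and invoke Theorem \ref{Uniform case}. Your bookkeeping is more careful than the paper's, but the constant you defer is not a normalisation issue.

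Under the stated hypothesis, $X$ is forced. The side length $2\cosh^{-1}\bigl(\cos(\pi/n)/\sin(\theta/2)\bigr)$ of a regular $n$-gon is strictly decreasing in $\theta$, and every edge of $\Omega_X$ is a full side of the faces on both of its sides. So adjacent faces have equal angles, and by connectedness all $\theta_i$ coincide. Since adjacent corners at a crossing are supplementary, $\theta_i=\pi/2$. Hence $\ell_\Omega(X)=\tfrac12\sum_i\text{Peri}(P_i)=nk\cosh^{-1}(\sqrt2\cos(\pi/n))$ exactly, with no optimisation needed. This argument should replace your appeal to Theorem \ref{Uniform case} in your third paragraph.

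Since $n=4+(8g-8)/k>4$, this value is strictly smaller than $2nk\cosh^{-1}(\sqrt2\cos(\pi/n))$. So the printed inequality fails for every $X$ satisfying its hypothesis. The extra factor $2$ comes from the paper's proof, which silently identifies $\ell_\Omega(X)$ with the full perimeter sum $\sum_i\ell(P_i)$ in its sets $\mathcal{A}$, $\mathcal{B}$. You should state outright that the correct bound, and the one your argument gives, is $nk\cosh^{-1}(\sqrt2\cos(\pi/n))$.

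The genuine gap is your last paragraph, together with the appeal to Kerckhoff's uniqueness in the converse. There, Bezdek's inequality plus Theorem \ref{Uniform case} is supposed to give the bound on all of $\mathcal{T}_g$. Even in your corrected form ($\sum_i\theta_i=k\pi/2$), Theorem \ref{Uniform case} is false for small $n$.

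Its averaging step needs convexity of $f(\theta)=\cosh^{-1}\bigl(\cos(\pi/n)/\sin(\theta/2)\bigr)$. A direct computation shows that $f''(\theta)$ has the sign of $\cos^2(\theta/2)-\sin(\pi/n)$. At $\theta=\pi/2$ this is negative for $n=5$. It vanishes for $n=6$, where the symmetric split is still a strict local maximum along $\theta_1+\theta_2=\pi$.

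Concretely, take $g=2$, $k=8$, $n=5$. Replace the eight right-angled regular pentagons by four regular pentagons of angle $\pi/2+0.2$ and four of angle $\pi/2-0.2$. This preserves the angle sum but lowers the total perimeter from about $42.45$ to about $41.14$.

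The paper's proof of that theorem misses this because its compact box $D$ admits $\theta_i>(n-2)\pi/n$, where $f$ is undefined. So the minimiser it feeds into Proposition \ref{Proposition} need not exist. Thus, for $n\in\{5,6\}$, neither your extension nor the paper's $\mathcal{A}\subseteq\mathcal{B}$ argument establishes the lower bound over $\mathcal{T}_g$. Any such argument must exploit the gluing constraints (matching side lengths, supplementary angles at each crossing), not only the total area.
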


\begin{proof}
Define the sets:
\[
\mathcal{A} = \left\{ \sum_{i=1}^{k} \ell(P_i) \;\middle|\; 
\begin{array}{l}
P_1, \dots, P_k \text{ are $n$-gons that admit} \\
\text{a side-pairing yielding a hyperbolic surface}
\end{array} \right\},
\]
\[
\mathcal{B} = \left\{ \sum_{i=1}^{k} \ell(P_i) \;\middle|\; P_1, \dots, P_k \text{ are $n$-gons} \right\}.
\]
Clearly $\mathcal{A} \subseteq \mathcal{B} \implies \min \mathcal{A} \geq \min \mathcal{B}$.

By Theorem \ref{Uniform case}, the minimum over $\mathcal{B}$ is attained when all $n$-gons are regular and right-angled. Moreover, such a configuration belongs to $\mathcal{A}$ since these polygons can be side-paired to form a hyperbolic surface. 
\[ \implies \min \mathcal{A} = \min \mathcal{B}. \]
Thus, the minimum is achieved when all $n$-gons are regular right-angled.
\end{proof}
 
\section{Non-uniform case}
In this section, we describe two types of non-uniform fillings and explicitly calculate their respective minima for the geodesic length function. We establish that both types realize their minima at a triangle surface. The following proposition describes the first type of non-uniform filling. 

\begin{prop}\label{Prop-non-uniform filling}
    There exists a filling \(\Omega_g\) on \(S_{g}\) such that $S_g\setminus \Omega_g=\bigcup_{i=1}^3 P_i$, and
\[ |P_{1}| = 2g+1,\quad |P_{2}| = 2g+1,\quad |P_{3}| = 4g+2. \]
\end{prop}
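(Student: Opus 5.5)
The plan is to realise $\Omega_g$ as the image of a $4$-regular fat graph whose boundary components have the prescribed lengths, and then to check that its straight-ahead cycles form a filling in minimal position. First I will do the combinatorial bookkeeping. If the intersection graph of a filling is $4$-regular with $V$ vertices, then it has $E=2V$ edges. With three complementary discs, Euler's formula $V-E+3=2-2g$ forces $V=2g+1$ and $E=4g+2$. The total number of sides is $2E=8g+4=(2g+1)+(2g+1)+(4g+2)$, so the prescribed side lengths are consistent. I will look for $\Omega_g=\{\alpha,\beta\}$, a \emph{filling pair} of simple closed curves meeting transversally in exactly $2g+1$ points.

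Concretely, I encode such a pair by a permutation. Number the intersection points $1,\dots,2g+1$ in the order they occur along $\alpha$. The curve $\beta$ visits them in some cyclic order $\sigma$, with alternating crossing signs so that the ribbon structure is orientable. Thickening $\alpha\cup\beta$ gives a fat graph, and gluing discs to its boundary components yields a closed surface. The boundary components are obtained by following the standard rule: travel along $\alpha$ forward, turn onto $\beta$ at the next vertex, and so on. Each boundary cycle can then be written explicitly in terms of $\sigma$.

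I would first try a linear order $\sigma(i)=i+c \pmod{2g+1}$ for a suitable step $c$ (for instance $c=2$ or $c=g$), possibly with a small twist at one vertex. I would tune it on $g=2$ (five points, faces $5,5,10$) and $g=3$, where the boundary cycles can be traced by hand. Once the right pattern is found, the general case should follow by induction on $g$. The inductive step is a local move that inserts two new intersection points in a fixed region of the picture. This move should add $2$ sides to each of $P_1$ and $P_2$ and $4$ sides to $P_3$, while preserving the number of boundary components. By Euler's formula it then raises the genus by exactly one.

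With the faces in hand, I will verify the filling conditions. By construction each of $\alpha$ and $\beta$ is simple, as each is a single straight-ahead circuit through distinct vertices. The complement consists of three discs. Each disc has at least $2g+1\ge 5$ sides, so there are no bigons or monogons. By the bigon criterion, $\alpha$ and $\beta$ are in minimal position and are essential and non-homotopic. The main obstacle is the boundary-cycle computation for general $g$: showing that the chosen $\sigma$ (or the inductive move) produces exactly three boundary cycles of lengths $2g+1,2g+1,4g+2$ rather than some other partition. I expect to settle this by an explicit tracing argument in which the two short faces are seen to be ``rotations'' of one another. If a single pair resists, I will fall back on three or more curves built from a standard chain on $S_g$, adjusted locally.
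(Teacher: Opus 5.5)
Your main plan cannot succeed, because no filling pair has a complementary region with an odd number of sides. At a transverse intersection point of $\alpha$ and $\beta$ the four half-edges occur in the cyclic order $\alpha,\beta,\alpha,\beta$, so every corner of a face lies between an arc of $\alpha$ and an arc of $\beta$. Walking around the boundary of a face you switch curves at every corner, so every component of $S_g\setminus(\alpha\cup\beta)$ is a $2m$-gon. Since $|P_1|=|P_2|=2g+1$ is odd, no visiting order $\sigma$, vertex twist or inductive move can produce faces of lengths $5,5,10$ on $S_2$, or $2g+1,2g+1,4g+2$ in general. Your Euler-characteristic count $V=2g+1$, $E=4g+2$ is correct, but it does not detect this parity constraint. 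The same argument applies whenever the curves of a filling split into two families of pairwise disjoint curves: all faces are then even. So any valid construction needs at least three curves whose crossing pattern contains an odd cycle.

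Your fallback to ``three or more curves'' is therefore the entire proof, and you leave it unspecified. The paper uses a \emph{cyclic} chain of $2g+1$ curves. Take the fat graph on vertices $v_1,\dots,v_{2g+1}$ with $\sigma_0=\prod_{i=1}^{2g+1}(e_i,f_i,f_{i+1}',e_{i-1}')$, indices mod $2g+1$. Then $\sigma_0\sigma_1$ has exactly three cycles: $(e_1,\dots,e_{2g+1})$, $(f_1',\dots,f_{2g+1}')$ and $(e_1',f_1,e_{2g}',f_{2g},\dots,e_3',f_3)$. The last is a single cycle of length $4g+2$ because stepping by $-2$ modulo the odd number $2g+1$ visits every residue, and Euler's formula gives genus $g$. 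Next, $\sigma_0^2\sigma_1=\prod_{i}(e_i,f_{i+1})(e_i',f_{i+1}')$ shows that the straight-ahead cycles are $2g+1$ simple closed curves $\gamma_i$, each formed by the two edges $e_i,f_{i+1}$ joining $v_i$ and $v_{i+1}$. Thus $\gamma_i$ meets only $\gamma_{i-1}$ and $\gamma_{i+1}$, once each, and the face $(e_1,\dots,e_{2g+1})$ has one side on each $\gamma_i$ in turn. This is precisely the odd cycle of curves that a pair cannot provide. Since any two of these curves meet at most once, minimal position is immediate. Pairwise non-homotopy follows by comparing intersection numbers with neighbouring curves.
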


\begin{proof}
Let the set of directed edges be $E = \{e_1, \dots, e_{2g+1}, f_1, \dots, f_{2g+1}, e_1', \dots, e_{2g+1}', f_1', \dots, f_{2g+1}'\}$. Consider a graph $\Gamma$ with vertices $V=E/\sim$ and edges $E/\sigma_1$. We define a fat graph structure on $\Gamma$ via the vertex permutation $\sigma_0$, consisting of $2g+1$ vertices. For $i = 1, \dots, 2g+1$ (with indices taken modulo $2g+1$), the vertices are given by:
\[ \sigma_0 = \prod_{i=1}^{2g+1} (e_i, f_i, f_{i+1}', e_{i-1}'). \]

By computing $\sigma_0 \circ \sigma_1$, we obtain three boundary components of $\Gamma$.
\begin{align*}
\sigma_0\circ\sigma_1 = & (e_1, e_2, \dots, e_{2g+1}) \\
                        & (f_1', f_2', \dots, f_{2g+1}') \\
                        & (e_1', f_1, e_{2g}', f_{2g}, e_{2g-2}', f_{2g-2}, \dots, e_{3}', f_{3}).
\end{align*}

The graph $\Gamma$ has $|V| = 2g+1$ vertices, $|E| = 4g+2$ undirected edges, and $|F| = 3$ faces. From Euler's characteristic formula, we calculate the genus of the fat graph as follows:
\[ |V| - |E| + |F| = (2g+1) - (4g+2) + 3 = 2 - 2g. \]

Furthermore, there are exactly $2g+1$ standard cycles for the graph $\Gamma$, computed from $\sigma_0^2\circ \sigma_1$:
\[ \sigma_0^2\circ \sigma_1 = \prod_{i=1}^{2g+1} (e_i, f_{i+1})(e_i', f_{i+1}'). \]
\end{proof}

\begin{eg}\label{Example-1 non-uniform filling}
To visually illustrate the construction in Proposition \ref{Prop-non-uniform filling}, consider the specific case of the genus two surface ($g=2$). The complement $S_2 \setminus \Omega_2$ decomposes into two pentagons ($|P_1|=|P_2|=5$) and one decagon ($|P_3|=10$). 

\begin{rmk}
While fat graphs are formally combinatorial objects, we annotate Figure \ref{Fatgraph_Non-uniform} with geometric angles ($\theta_i$ and $\pi - \theta_i$) to support our analysis. This visually clarifies how each vertex of the graph forms a corner in the resulting hyperbolic polygons.
\end{rmk} 

The corresponding fat graph $\Gamma$, consisting of $5$ vertices and $10$ edges, is depicted in Figure \ref{Fatgraph_Non-uniform}, with its three geometric boundary components explicitly drawn in Figure \ref{3_polygon}.

    \begin{figure}[H]
        \centering
        \includegraphics[width=0.999\linewidth]{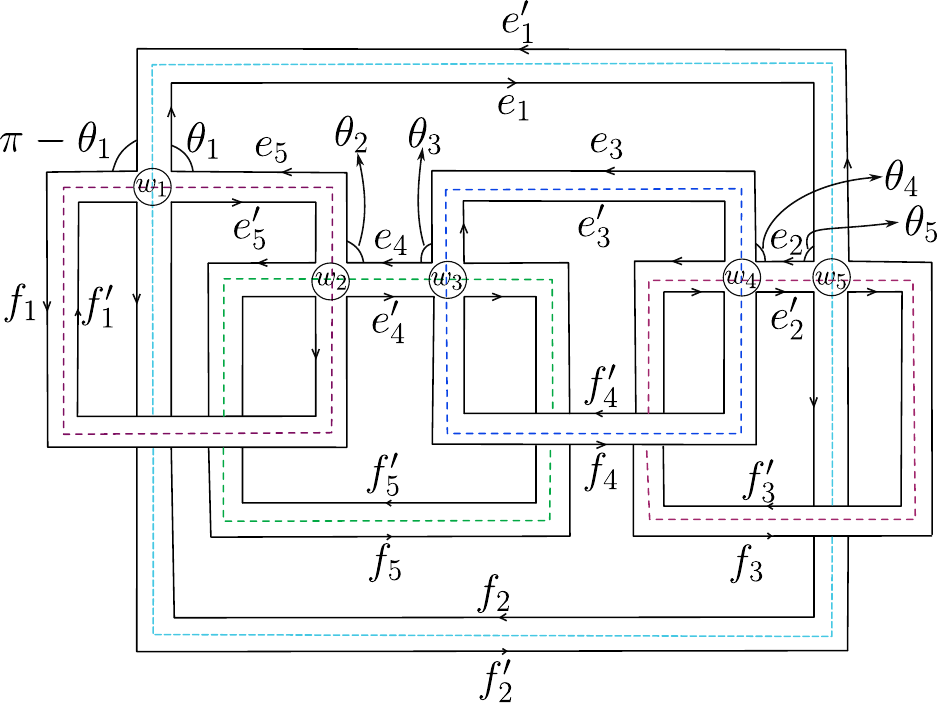}
        \caption{Fat graph, annotated with geometric angles.}
        \label{Fatgraph_Non-uniform}
    \end{figure}

    \begin{figure}[H]
        \centering
        \includegraphics[width=0.8\linewidth]{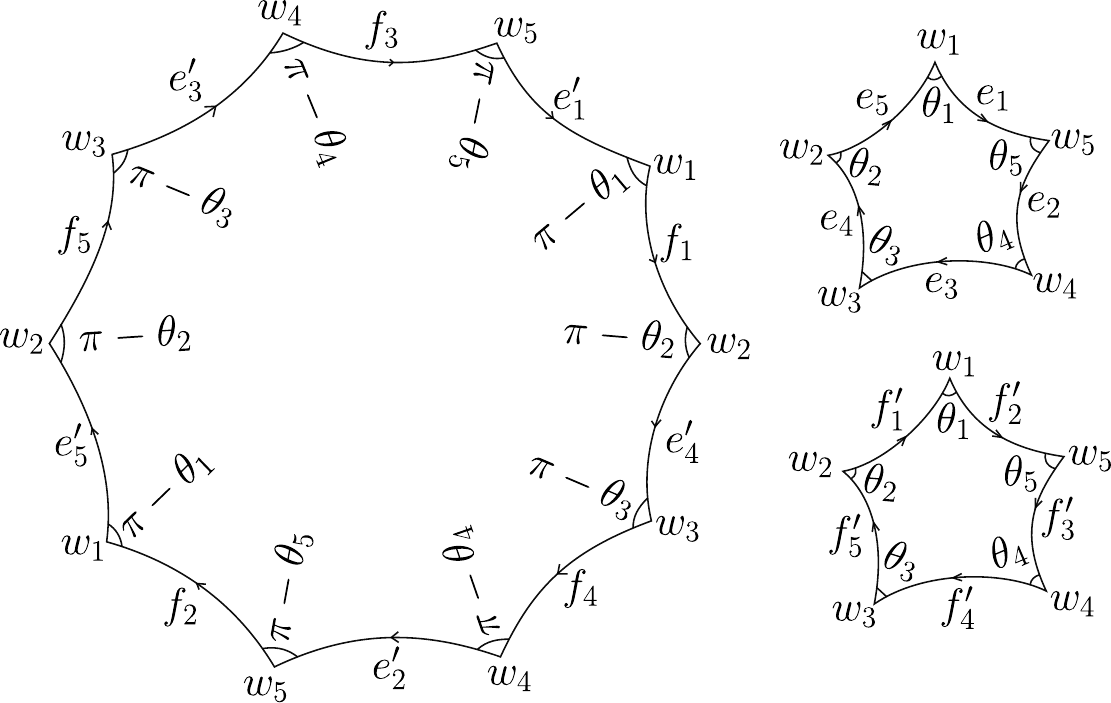}
        \caption{The three boundary components.}
        \label{3_polygon}
    \end{figure}
\end{eg}

\begin{lemma}\label{lem:supplementary_angles}
Let $P_1$ be a regular hyperbolic $n$-gon with interior angle $\theta$, and let $P_2$ be a regular hyperbolic $m$-gon with interior angle $\pi - \theta$. If $P_1$ and $P_2$ have the same side length, then $\theta$ is uniquely determined by the formula
\[ \tan(\theta/2) = \frac{\cos(\pi/n)}{\cos(\pi/m)}. \]
\end{lemma}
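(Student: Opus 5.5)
We use the side-length formula from the proof of the perimeter proposition in the Preliminaries. Drop the perpendicular from the center $B$ of a regular $n$-gon with interior angle $\theta$ to a side. This gives a right triangle with right angle at the foot of the perpendicular, angle $\pi/n$ at the center $B$, angle $\theta/2$ at the polygon vertex, and half-side $s$ opposite the central angle. Lemma \ref{sine and cosine rules}(v) gives $\cosh s = \cos(\pi/n)/\sin(\theta/2)$. Applying the same formula to the $m$-gon, whose interior angle is $\pi-\theta$, its half-side $s'$ satisfies
\[
\cosh s' = \frac{\cos(\pi/m)}{\sin\bigl((\pi-\theta)/2\bigr)} = \frac{\cos(\pi/m)}{\cos(\theta/2)}.
\]

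The hypothesis that the two side lengths agree means $s=s'$. Since $\cosh$ is injective on $[0,\infty)$, this is equivalent to
\[
\frac{\cos(\pi/n)}{\sin(\theta/2)} = \frac{\cos(\pi/m)}{\cos(\theta/2)}.
\]
Cross-multiplying gives $\tan(\theta/2) = \cos(\pi/n)/\cos(\pi/m)$, which is the stated formula. Note that $\theta$ and $\pi-\theta$ both lie in $(0,\pi)$, so $\theta/2 \in (0,\pi/2)$ and both $\sin(\theta/2)$ and $\cos(\theta/2)$ are positive; the division is therefore legitimate.

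For uniqueness, $\tan$ is a strictly increasing bijection from $(0,\pi/2)$ onto $(0,\infty)$. Since $n,m\ge 3$, the right-hand side $\cos(\pi/n)/\cos(\pi/m)$ is a positive number. Hence exactly one $\theta/2\in(0,\pi/2)$ solves the equation, and $\theta = 2\arctan\bigl(\cos(\pi/n)/\cos(\pi/m)\bigr)$ is uniquely determined.

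The only point that needs care is that the two regular polygons actually exist as hyperbolic polygons. This requires $\theta < (n-2)\pi/n$ and $\pi-\theta < (m-2)\pi/m$, which are exactly the conditions making both $\cosh$ values exceed $1$. These conditions are implicit in the hypothesis that $P_1$ and $P_2$ exist. If they hold, the unique $\theta$ found above is the angle; the lemma asserts only uniqueness and the formula, not existence, so no further checking is needed.
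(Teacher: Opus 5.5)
Your proof is correct and is essentially the paper's argument: both use relation (v) of Lemma~\ref{sine and cosine rules} to write $\cos(\pi/n)=\cosh(\ell/2)\sin(\theta/2)$ and $\cos(\pi/m)=\cosh(\ell/2)\cos(\theta/2)$, divide to obtain the tangent formula, and then use strict monotonicity of $\tan$ for uniqueness. Your extra checks (positivity of $\sin(\theta/2)$ and $\cos(\theta/2)$, and the existence conditions $\theta<(n-2)\pi/n$, $\pi-\theta<(m-2)\pi/m$) are correct and are left implicit in the paper.
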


\begin{proof}
Let $\ell$ be the common side length. Applying Lemma \ref{RightAngledTriangle} to $P_1$ and $P_2$ yields
\[ \cos(\pi/n) = \cosh(\ell/2) \sin(\theta/2) \quad \text{and} \quad \cos(\pi/m) = \cosh(\ell/2) \sin((\pi - \theta)/2) \]
\[ \implies \frac{\cos(\pi/n)}{\cos(\pi/m)} = \tan(\theta/2). \]
Since $\tan(\theta/2)$ is strictly increasing for $\theta \in (0, \pi) \implies \theta$ is uniquely determined.
\end{proof}

The following lemma establishes a general geometric bound that will be the core step in determining the minimal length of our filling.

\begin{lemma}\label{Lemma-Perimeter-Minimization-General}
Let $Q_1, \dots, Q_k$ be hyperbolic $n$-gons such that the sum of all their $kn$ interior angles is $s$. Let $Q^*$ be the regular hyperbolic $n$-gon with interior angle $s/kn$. Then
\[ \sum_{i=1}^k \text{Peri}(Q_i) \ge k \cdot \text{Peri}(Q^*). \]
Equality holds if and only if each $Q_i$ is regular and isometric to $Q^*$.
\end{lemma}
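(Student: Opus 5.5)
The plan has two stages. First I replace each $Q_i$ by a regular polygon, and then I run the equal-angle argument already used in the proof of Theorem \ref{Uniform case}.

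\emph{Reduction to regular polygons.} For each $Q_i$ let $s_i$ be the sum of its interior angles, so $\sum_i s_i = s$. By Gauss--Bonnet (Theorem \ref{Gauss-Bonnet}), $Q_i$ has area $(n-2)\pi - s_i$. Let $R_i$ be the regular $n$-gon with interior angle $s_i/n$; it has the same area. I will invoke the isoperimetric fact for hyperbolic $n$-gons, already used in the proof of Theorem \ref{Uniform case} via \cite{BK}: among hyperbolic $n$-gons of fixed area, the regular one has the least perimeter, and it is the unique minimizer. This gives $\text{Peri}(Q_i) \ge \text{Peri}(R_i)$, with equality iff $Q_i$ is isometric to $R_i$.

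\emph{Comparison among regular polygons.} Put $f(\theta) = \cosh^{-1}\!\left(\cos(\pi/n)/\sin(\theta/2)\right)$, so that $\text{Peri}(R_i) = 2n f(s_i/n)$ by the perimeter formula in the preliminaries. It now suffices to show
\[
\sum_{i=1}^k f(\theta_i) \ge k f\Big(\tfrac1k \textstyle\sum_i \theta_i\Big)
\]
for $\theta_i \in (0, (n-2)\pi/n)$, with equality only when all $\theta_i$ are equal. I will argue as in the proof of Theorem \ref{Uniform case}. Fix the sum. Since $f \to \infty$ as $\theta \to 0$, a minimizer exists on a compact subdomain. If two coordinates of the minimizer differed, Proposition \ref{Proposition} would say that replacing them by their average strictly decreases the sum, a contradiction. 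Hence the unique minimizer has all $\theta_i = s/kn$, and that configuration is exactly $k$ copies of $Q^*$.

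A cleaner alternative for this step is to show that $f$ is strictly convex on $(0,\pi)$ and then apply Jensen's inequality. Writing $u = \theta/2$ and $c = \cos(\pi/n)$, one has $f'(\theta) = -\tfrac12 \cos u / \big(\sin u \sqrt{c^2 - \sin^2 u}\big)$. The quantity $\cos u/\big(\sin u\sqrt{c^2-\sin^2 u}\big)$ is a product of the positive decreasing factor $\cot u$ and the positive increasing factor $(c^2-\sin^2 u)^{-1/2}$, so the sign of its derivative is not obvious. For that reason I would fall back on Proposition \ref{Proposition} rather than rely on convexity.

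\emph{Equality case and main obstacle.} Equality forces equality in both stages: each $Q_i$ must be regular, and all $s_i$ must equal $s/k$. This is exactly the statement that each $Q_i$ is isometric to $Q^*$. I expect the main obstacle to be the strictness needed for uniqueness, specifically the claim that Proposition \ref{Proposition} gives a strict decrease whenever $\theta_1 \ne \theta_2$. In Case 2 of that proof, equality occurs when $\cos(r/2) = 2S - 1$. One must check that, under condition (4), equality is incompatible with $\theta_1 \neq \theta_2$. I would verify this directly: substitute $\cos(r/2) = 2S-1$ into $\cos(A-B) = 2S - \cos(r/2)$, which gives $\cos(A-B) = 1$, hence $A = B$.
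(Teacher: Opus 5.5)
Your plan is the paper's own proof: Bezdek's isoperimetric reduction to regular $n$-gons, then pairwise averaging of the angles via Proposition~\ref{Proposition}. The second stage has a genuine gap, which the paper's argument shares. In fact the inequality claimed there, and the lemma itself, are false. The failing step is ``since $f\to\infty$ as $\theta\to 0$, a minimizer exists on a compact subdomain.'' That controls only one end of the domain. As $\theta\to (n-2)\pi/n$ the regular $n$-gon shrinks to a point, so $f\to 0$ and $f'\to-\infty$. The infimum of $\sum_i f(\theta_i)$ on $\{\sum_i\theta_i=\mathrm{const}\}$ can therefore be approached at that boundary, where there is no Lagrange critical point. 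Proposition~\ref{Proposition} only says where a minimum lies \emph{if} one exists, since its proof classifies interior critical points. It does not say that replacing two unequal angles by their average lowers $f(\theta_i)+f(\theta_j)$. Your check that equality in Case~2 forces $A=B$ is correct, but it concerns interior critical points only and does not address this.

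The convexity computation you set aside settles the matter. Put $u=\theta/2$ and $c=\cos(\pi/n)$. Then $f'(\theta)=-\tfrac{c}{2}\,g(u)$ with $g(u)=\cos u/\bigl(\sin u\sqrt{c^2-\sin^2u}\bigr)$; note that you dropped the factor $c$. Also $(\log g)'(u)=-\frac{1}{\sin u\cos u}+\frac{\sin u\cos u}{c^2-\sin^2u}$, which is negative iff $\sin^2u\cos^2u<c^2-\sin^2u$, i.e.\ iff $\cos^2u>\sin(\pi/n)$. Hence $f''(\theta)>0$ iff $\cos\theta>2\sin(\pi/n)-1$.

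So $f$ is strictly concave on $(\theta^\dagger,(n-2)\pi/n)$, where $\cos\theta^\dagger=2\sin(\pi/n)-1$. This interval is nonempty for every $n\ge 3$, because $\cos^2\bigl(\tfrac{(n-2)\pi}{2n}\bigr)=\sin^2(\pi/n)<\sin(\pi/n)$. The threshold is exactly the equality value $C'=2S-1$ in the proof of Proposition~\ref{Proposition}. By strict concavity, any two regular $n$-gons with distinct angles in this interval violate the lemma.

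Concretely, take $n=3$, $k=2$ and regular triangles with angles $\pi/3-0.001$ and $\pi/3-0.1$. Their total perimeter is about $6(0.0416+0.4354)\approx 2.86$, whereas $2\,\text{Peri}(Q^*)\approx 12\cdot 0.3024\approx 3.63$. Geometrically, small hyperbolic polygons are nearly Euclidean, with perimeter of order $\sqrt{\text{area}}$, so concentrating the area beats splitting it evenly.

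No argument can prove the lemma without an extra hypothesis. What is true, and what your Jensen route proves cleanly, is a restricted version. If every averaged angle $s_i/n$ lies in $(0,\theta^\dagger]$, strict convexity of $f$ there gives $\sum_i f(s_i/n)\ge k\,f(s/kn)$, with equality iff all $s_i$ agree. Bezdek's uniqueness then gives the stated equality case.
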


\begin{proof}
For each $i \in \{1, \dots, k\}$, let $s_i$ be the sum of the $n$ interior angles of $Q_i$, and let $Q_i^{reg}$ be the corresponding regular $n$-gon with interior angle $\theta_i = s_i/n$. By Bezdek's theorem \cite{BK}, $\text{Peri}(Q_i) \ge \text{Peri}(Q_i^{reg})$, with equality if and only if $Q_i$ is isometric to $Q_i^{reg}$. Thus,
\[ \sum_{i=1}^k \text{Peri}(Q_i) \ge \sum_{i=1}^k \text{Peri}(Q_i^{reg}). \]

If any two interior angles $\theta_i \neq \theta_j$, Proposition \ref{Proposition} implies that replacing both $Q_i^{reg}$ and $Q_j^{reg}$ with two identical regular $n$-gons of angle $(\theta_i + \theta_j)/2$ decreases the total perimeter while preserving the total angle sum. Since the total angle sum across all $k$ polygons is fixed and $s= \sum_{i=1}^k n \theta_i$, a repeated pairwise application of Proposition \ref{Proposition} implies that the sum is minimized when all $\theta_i$ are identical and equal to the average $s/kn$. This configuration exactly corresponds to $k$ copies of $Q^*$.
\end{proof}

In the following theorem, we explicitly calculate the minima of the geodesic length functions for the filling $\Omega_g$ defined in Proposition \ref{Prop-non-uniform filling}. Furthermore, we identify the surface where these minima are realized.

\begin{thm}\label{Geodesic length minima for Non-uniform case}
For the filling $\Omega_g$ on $S_g$,
$$ \ell_{\Omega_g}(X) \ge (2g+1)\Bigg[ \cosh^{-1}\!\left(\frac{\cos(\pi/(2g+1))}{\sin(\theta/2)}\right) + \cosh^{-1}\!\left(\frac{\cos(\pi/(4g+2))}{\sin(\theta/2)}\right)\Bigg], $$
where $\theta$ satisfies
$$ \tan\left(\frac{\theta}{2}\right) = \frac{\cos(\pi/(4g+2))}{\cos(\pi/(2g+1))}. $$
Equality holds if and only if $X$ is the triangle surface of type $(2, 2g+1, 4g+2)$.
\end{thm}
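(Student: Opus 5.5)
Here is the plan. On any hyperbolic surface $X$ in the marked class, realize $\Omega_g$ by its geodesic representatives. Then $X\setminus\Omega_g$ consists of three geodesic polygons $P_1,P_2,P_3$ with $2g+1$, $2g+1$ and $4g+2$ sides. Each edge of the fat graph $\Gamma$ is a geodesic segment lying on exactly two boundary cycles, so
\[ \ell_{\Omega_g}(X)=\tfrac12\big(\text{Peri}(P_1)+\text{Peri}(P_2)+\text{Peri}(P_3)\big). \]
From the boundary cycles of $\sigma_0\circ\sigma_1$ we can read the edge incidences. The $e_i$-edges bound $P_1$, the $f_i'$-edges bound $P_2$, and the remaining half-edges bound $P_3$. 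Hence every edge of $P_1\cup P_2$ is glued to an edge of $P_3$, and $P_3$ has exactly $4g+2$ sides.

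\textbf{Angle constraint.} At each of the $2g+1$ vertices (degree $4$) the four corners are, in cyclic order, $\theta_i,\pi-\theta_i,\theta_i,\pi-\theta_i$ (Figure \ref{Fatgraph_Non-uniform}). At vertex $i$, one corner belongs to $P_1$ and one to $P_2$, and these two are opposite, hence equal to $\theta_i$. The other two corners, both $\pi-\theta_i$, belong to $P_3$. This has to be checked from $\sigma_0=(e_i,f_i,f'_{i+1},e'_{i-1})$ together with the face cycles; I expect $e_i$ and $f'_{i+1}$ to occupy opposite positions. Let $s=\sum_i\theta_i$. Then $P_1$ and $P_2$ together have angle sum $2s$ over $2(2g+1)$ corners, and $P_3$ has angle sum $2(2g+1)\pi-2s$. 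By Bezdek's theorem \cite{BK} and Lemma \ref{Lemma-Perimeter-Minimization-General} (with $k=2$ for $P_1,P_2$ and $k=1$ for $P_3$), we get
\[ 2\ell_{\Omega_g}(X)\ \ge\ 2(2g+1)\cdot 2\,\Big[f_{2g+1}(\alpha)+f_{4g+2}(\pi-\alpha)\Big],\qquad \alpha=\tfrac{s}{2g+1}, \]
where $f_n(\phi)=\cosh^{-1}\!\big(\cos(\pi/n)/\sin(\phi/2)\big)$. This reduces the problem to a one-variable minimization of $h(\alpha)=f_{2g+1}(\alpha)+f_{4g+2}(\pi-\alpha)$ over the range where both regular polygons exist.

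\textbf{One-variable minimization.} Differentiating as in the proof of Proposition \ref{Proposition}, $f_n'(\phi)=-\tfrac12\cot(\phi/2)/\sinh(\text{half side})$. Setting $h'(\alpha)=0$ and using $\sinh(\ell/2)$ together with $\cosh(\ell/2)\sin(\phi/2)=\cos(\pi/n)$, I expect the critical point equation to reduce to the equal-side-length condition. Lemma \ref{lem:supplementary_angles} then shows this condition is equivalent to $\tan(\alpha/2)=\cos(\pi/(4g+2))/\cos(\pi/(2g+1))$. For the minimum I would use that $h\to\infty$ at both endpoints, because $\sin\to 0$ or the argument of $\cosh^{-1}$ drops to $1$ (there the derivative blows up, which pushes the minimizer inward). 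Uniqueness of the critical point would come either from convexity of $f_n$, which can be checked from the derivative, or from the strict monotonicity in Lemma \ref{lem:supplementary_angles}. Plugging in gives the stated bound. I expect this step to be the main technical obstacle, especially confirming that the critical-point equation collapses exactly to equal side lengths.

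\textbf{Equality case.} Equality forces $P_1,P_2$ to be regular $(2g+1)$-gons with angle $\theta$ and $P_3$ to be a regular $(4g+2)$-gon with angle $\pi-\theta$, all with equal side length. Such polygons do glue according to $\Gamma$: the angles at each vertex sum to $2\pi$ and glued sides have equal length, so the minimum is realized and equals the bound. Finally, the surface is identified by subdividing each regular polygon from its center into triangles with angles $\pi/2$, $\pi/(2g+1)$ (at the center of $P_1,P_2$) or $\pi/(4g+2)$ (at the center of $P_3$), and $\theta/2$ or $(\pi-\theta)/2$ at the vertices. The aim is to show the surface is tiled by copies of a single triangle with angles matching type $(2,2g+1,4g+2)$, i.e.\ that its symmetry group acts with quotient that triangle orbifold. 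This identification is the step I would check last, using the reflective symmetries of the regular polygons and the rotational symmetry $i\mapsto i+1$ of $\sigma_0$.
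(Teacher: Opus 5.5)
You follow the same route as the paper: half the perimeter sum, Bezdek face by face, Lemma \ref{lem:supplementary_angles} to fix the angle, then symmetry to identify the surface. The step you flagged as the main obstacle is exactly where this fails, and it cannot be repaired inside this relaxation.

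First, the derivative is $f_n'(\phi)=-\tfrac12\cot(\phi/2)\coth a_n(\phi)$, where $a_n(\phi)$ is the half-side; it is not $1/\sinh a_n(\phi)$. Hence
\[
h'(\alpha)=\tfrac12\bigl[\tan(\alpha/2)\coth b-\cot(\alpha/2)\coth a\bigr],
\]
with $a,b$ the half-sides of the $(2g+1)$-gon and the $(4g+2)$-gon. At the equal-side angle $\alpha^*$ you have $a=b$, so $h'(\alpha^*)=0$ would force $\alpha^*=\pi/2$. But Lemma \ref{lem:supplementary_angles} gives $\tan(\alpha^*/2)=\cos(\pi/(2g+1))/\cos(\pi/(4g+2))<1$, which is the reciprocal of the relation you copied. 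So $h'(\alpha^*)<0$, and the equal-side angle is not a critical point at all.

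Second, your endpoint analysis is reversed. Positivity of the areas gives $\pi/(2g+1)<\alpha<(2g-1)\pi/(2g+1)$. At each end one regular polygon shrinks to a point, so $h$ stays finite rather than blowing up. Since $\coth\to\infty$ there, $h'\to+\infty$ at the left end and $h'\to-\infty$ at the right end. Thus $h$ decreases toward both degenerate endpoints, and nothing pushes the minimizer inward. Also $f_n$ is not convex: it behaves like a square root near the degenerate end.

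Concretely, for $g=2$: $\alpha^*\approx 80.7^\circ$ and $h(\alpha^*)=2\cosh^{-1}\sqrt{\cos^2 36^\circ+\cos^2 18^\circ}\approx 1.383$. In contrast, $h(100^\circ)\approx 1.277$, and $h(\alpha)\to\cosh^{-1}(\cos 18^\circ/\sin 36^\circ)\approx 1.061$ as $\alpha\to 108^\circ$. So your bound $2(2g+1)\inf h$ lies strictly below the length of the symmetric surface. The fact that the symmetric polygons glue up then says nothing about minimality.

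\textbf{What is missing.} The missing ingredient is the gluing constraint: each side of $P_1\cup P_2$ has the same length as the side of $P_3$ it is glued to. The angle-only relaxation forgets this, so its infimum sits on configurations that cannot be assembled into a surface. The paper's written proof has the same hole. It selects $\theta_0$ by the equal-side requirement but never shows that this choice minimizes the relaxed bound. Your calculus step was trying to close a gap the paper leaves open.

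\textbf{A route that works.} Avoid optimization and combine Kerckhoff's uniqueness \cite{KSP} with fat-graph automorphisms.
\begin{enumerate}
\item[(i)] Take the shift $\tau\colon i\mapsto i+1$ and $\sigma_0^2$. The latter fixes every vertex, sends $e_i\mapsto f'_{i+1}$ and $f_i\mapsto e'_{i-1}$, and swaps $P_1$ and $P_2$; this also settles your opposite-corner check.
\item[(ii)] Both commute with $\sigma_0$ and $\sigma_1$, so they are mapping classes preserving $\Omega_g$. Uniqueness of the minimizer $X_0$ makes them isometries of $X_0$ preserving the geodesic filling.
\item[(iii)] They generate a group transitive on vertices and on edges. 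So all edges have one length and all $\theta_i$ agree.
\item[(iv)] The faces are then equilateral and equiangular, hence regular, and Lemma \ref{lem:supplementary_angles} fixes $\theta$.
\item[(v)] $\tau\sigma_0^2$ has order $4g+2$, which is the isometry you need for the quotient.
\end{enumerate}

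This yields $\ell_{\Omega_g}(X_0)=4(2g+1)\cosh^{-1}\sqrt{\cos^2(\pi/(2g+1))+\cos^2(\pi/(4g+2))}$. That does not literally match the displayed right-hand side: the coefficient there is $2g+1$, the second term has $\sin(\theta/2)$, and the tangent relation is reciprocal. So ``plugging in gives the stated bound'' has to be reconciled rather than asserted.
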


\begin{proof}
By Kerckhoff's theorem \cite{KSP}, the length function $\ell_{\Omega_g}$ admits a unique global minimum in $\mathcal{T}_g$. Let $X_0$ be this minimizing surface.

Let $P_1, P_2$ be the two $(2g+1)$-gons and $P_3$ be the $(4g+2)$-gon of the filling on $X_0$. Let $\theta_1, \dots, \theta_{2g+1}$ be the interior angles of $P_1$ and $P_2$. The interior angles of $P_3$ are the corresponding supplementary angles $\pi - \theta_k$. The total filling length is $\ell_{\Omega_g}(X_0) = \frac{1}{2} \big( {Peri}(P_1) + {Peri}(P_2) + {Peri}(P_3) \big)$.

By Lemma \ref{Lemma-Perimeter-Minimization-General}, the perimeter sum of $P_1$ and $P_2$ is minimized by two identical regular $(2g+1)$-gons, denoted $P^*$, with an interior angle of $\theta_0 = \frac{1}{2g+1}\sum_{k=1}^{2g+1}\theta_k$. By Bezdek's theorem \cite{BK}, the perimeter of $P_3$ is minimized by a regular $(4g+2)$-gon $Q^*$ with an interior angle of $\pi - \theta_0$. Thus, we have:
$$ \ell_{\Omega_g}(X_0) \ge {Peri}(P^*) + \frac{1}{2} {Peri}(Q^*). $$

We now prove that this lower bound is realized by a filling on $S_g$ for a suitable choice of $\theta_0$. To realize this minimum, we assign the regular polygons $P^*$ and $Q^*$ to the three complementary regions of $\Omega_g$. The geometric side-pairing conditions of the filling require the side lengths of $P^*$ and $Q^*$ to be equal. By Lemma \ref{lem:supplementary_angles}, this condition uniquely determines the interior angle $\theta_0$, satisfying:
$$ \tan\left(\theta_0/2\right) = \frac{\cos(\pi/(4g+2))}{\cos(\pi/(2g+1))}. $$

Substituting the corresponding regular side lengths into the perimeter sum explicitly yields:
$$ \ell_{\Omega_g}(X) \ge (2g+1)\Bigg[ \cosh^{-1}\!\left(\frac{\cos(\pi/(2g+1))}{\sin(\theta/2)}\right) + \cosh^{-1}\!\left(\frac{\cos(\pi/(4g+2))}{\sin(\theta/2)}\right)\Bigg]. $$

Finally, we identify the minimizing surface $X_0$. Because the minimal configuration consists entirely of regular polygons with equal side lengths, a "one-click" rotational shift $\phi: w_k \mapsto w_{k+1}$ of the regular polygon $Q^*$ induces a global isometry of order $4g+2$ on $X_0$. 
The quotient $X_0 / \langle \phi \rangle$ has exactly three cone points: 
The center of $Q^*$ with order $4g+2$. 
The vertices $w_k$ with order $2$. 
The centers of $P_1$ and $P_2$ with order $2g+1$.

Thus, the quotient $X_0 / \langle \phi \rangle$ has signature $(0; 2, 2g+1, 4g+2)$. This proves that the minimum is attained exactly on the triangle surface of type $(2, 2g+1, 4g+2)$.
\end{proof}

The following proposition introduces a second type of non-uniform filling for any surface of genus $g \ge 2$. Following the proof, we provide a detailed visual example for the genus two case.

\begin{prop}\label{Prop-general-genus-filling-g2}
For $g \ge 2$, there exists a filling $\Omega_g'$ on $S_g$ such that $S_g \setminus \Omega_g' = \bigcup_{i=1}^{2g+2} Q_i$, with
$$|Q_{1}| = \dots = |Q_{2g}| = 4, \quad \text{and} \quad |Q_{2g+1}| = |Q_{2g+2}| = 4g.$$
\end{prop}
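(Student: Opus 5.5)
The plan is to give an explicit fat graph, as in the proof of Proposition \ref{Prop-non-uniform filling}. First the counts. A $4$-regular filling whose complement has $2g$ quadrilaterals and two $4g$-gons has total boundary length $8g+8g=16g=2|E|$, so $|E|=8g$ and $|V|=4g$. Then
\[
|V|-|E|+|F|=4g-8g+(2g+2)=2-2g,
\]
so the combinatorics is consistent with genus $g$. I will take $8g$ undirected edges with directed labels $e_1,\dots,e_{4g},f_1,\dots,f_{4g}$ and their reverses $e_1',\dots,e_{4g}',f_1',\dots,f_{4g}'$, with indices mod $4g$. The fixed-point-free involution $\sigma_1$ swaps each directed edge with its reverse.

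The vertex permutation is modelled on the earlier one:
\[
\sigma_0=\prod_{i=1}^{4g}\left(e_i,\,f_i,\,f'_{i+a},\,e'_{i-1}\right),
\]
with a shift $a$ still to be fixed. Exactly as before, the $e_i$ chain into one boundary cycle $(e_1,e_2,\dots,e_{4g})$ and the $f'_i$ chain into a second cycle $(f'_1,\dots,f'_{4g})$. These are the two $4g$-gons $Q_{2g+1}$ and $Q_{2g+2}$. The remaining $8g$ darts $e_i'$ and $f_i$ alternate in the orbits of $\sigma_0\circ\sigma_1$, and each orbit has the form $e'_i\mapsto f_j\mapsto e'_{i+s}\mapsto\cdots$ for a fixed step $s$ depending on $a$. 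In Proposition \ref{Prop-non-uniform filling} this step produced a single long orbit. Here I choose $a$ so that $s\equiv 2g \pmod{4g}$. Then every orbit closes after two $e'$ and two $f$ darts, which gives exactly $2g$ boundary cycles of length $4$, the quadrilaterals $Q_1,\dots,Q_{2g}$. If this exact form of $\sigma_0$ cannot produce step $2g$, I would instead reorder the cyclic order at each vertex, e.g.\ $(e_i,f'_{i+a},f_i,e'_{i-1})$, and recompute. Fixing the right cyclic order and shift is the main obstacle; once it is found, the remaining verification is mechanical.

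Next I compute the standard cycles from $\sigma_0^2\circ\sigma_1$, which pairs opposite darts at each vertex. I need to check two things. First, each resulting closed curve passes through every vertex at most once in a transversal manner, so that it is simple. Second, the curves are pairwise non-homotopic. By the Euler computation, the thickened fat graph is a closed surface of genus $g$ with all complementary regions discs, so the system fills. Minimal position follows because no complementary region is a monogon or bigon: all faces have at least $4$ sides, so the bigon criterion applies.

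Finally, I would illustrate the construction for $g=2$ (eight vertices, four quadrilaterals and two octagons) with a figure of the fat graph and its boundary components, in parallel with Example \ref{Example-1 non-uniform filling}.
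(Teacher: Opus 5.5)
Your route is the paper's (an explicit $4$-valent fat graph on $4g$ vertices, faces from $\sigma_0\circ\sigma_1$, curves from $\sigma_0^2\circ\sigma_1$), and your ansatz does work. But the proposal stops before producing the graph, and the step you call \emph{exactly as before} is not automatic.

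\textbf{Fixing the shift.} From your $\sigma_0$ one gets $e_i'\mapsto f_i\mapsto e'_{i-a-1}$. So $s=-(a+1)$, and $s\equiv 2g$ forces $a\equiv 2g-1 \pmod{4g}$; no reordering of the cyclic order is needed.

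\textbf{The $f'$-face.} This face is the orbit of $f'_j\mapsto f'_{j+a}$. It is a single $4g$-cycle precisely because $\gcd(2g-1,4g)=1$. For a shift not coprime to $4g$ it would split, and the face count would fail.

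\textbf{Identification with the paper.} With $a=2g-1$, relabel $v_i\mapsto w_{-i}$, $e_i\mapsto e'_{-i-1}$, $f_i\mapsto f_{-i}$. This carries your vertex $(e_i,f_i,f'_{i+2g-1},e'_{i-1})$ to the paper's $(e_k,e'_{k-1},f_k,f'_{k-2g+1})$ with $k=-i$. So you recover exactly the paper's fat graph.

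\textbf{The deferred checks.} The two checks you postpone also need doing. Here $\sigma_0^2\circ\sigma_1$ gives $e_i\mapsto f_{i+1}\mapsto e_{i-(2g-2)}$, so there are $c=\gcd(4g,2g-2)=2\gcd(2,g-1)$ curves.

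Simplicity: the curve through $e_i$ passes exactly once through each vertex $v_j$ with $j\equiv i$ or $j\equiv i+1 \pmod{c}$, and through no other vertex. Since $c\ge 2$ these two classes differ, so the curve is simple.

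Non-homotopy, even $g$: the two curves meet at all $4g$ vertices. They are in minimal position (no bigons), so they are not homotopic.

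Non-homotopy, odd $g$: the curves through $e_1$ and $e_3$ are disjoint, and likewise those through $e_2$ and $e_4$. So intersection number gives nothing, and you need a separate argument. One option:
\begin{itemize}
\item Every edge has a quadrilateral on one side and a $4g$-gon on the other, since quadrilaterals use the darts $e'_i,f_i$ and the long faces use $e_i,f'_i$.
\item Suppose two disjoint curves of the system were homotopic, so they cobound an annulus.
\item The remaining two curves are also disjoint from each other, and there are no bigons. Hence that annulus would be cut by single crossing edges into quadrilateral faces.
\item Then some crossing edge would have quadrilaterals on both sides, a contradiction.
\end{itemize}
The paper's proof only counts the standard cycles and is equally silent on simplicity and non-homotopy. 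So this is where a complete write-up must go beyond both.
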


\begin{proof}
Let $\Gamma_g$ be the $4$-regular graph having $4g$ vertices and an edge set $E = \{e_i, e_i', f_i, f_i' \mid 1 \le i \le 4g\}$.

The fat graph structure is defined by the following permutation, where all indices are calculated modulo $4g$:
$$\sigma_0 = \prod_{i=1}^{4g} (e_i, e_{i-1}', f_i, f_{i-2g+1}')$$

Upon computation, we obtain:
$$\sigma_0 \sigma_1 = \left( \prod_{i=1}^{2g} (e_i, f_{i+1}, e_{i+2g}, f_{i+2g+1}) \right) (e_1', e_{4g}', e_{4g-1}', \dots, e_2') (f_{c_1}', f_{c_2}', \dots, f_{c_{4g}}')$$
where $c_k \equiv 2g + 1 - (k-1)(2g-1) \pmod{4g}$.
This indicates that there are $2g$ boundary components having a combinatorial length of $4$, and $2$ components having a combinatorial length of $4g$. 

Let $Q_k$, for $k = 1, \dots, 2g$, be the quadrilaterals defined by the vertex sequence $(w_k, w_{k+1}, \allowbreak w_{k+2g}, w_{k+2g+1})$ and the boundary edge cycle $(e_k, f_{k+1}, \allowbreak e_{k+2g}, f_{k+2g+1})$, with interior angles $\theta_i$ located at each respective vertex $w_i$. Here, $e_k$ denotes the directed edge $(w_k, w_{k+1})$ and $f_k$ denotes the directed edge $(w_k, w_{k+2g-1})$.

Similarly, let $Q_{2g+1}$ and $Q_{2g+2}$ be the $4g$-gonal faces defined by the vertex sequences $(w_1, w_{4g}, w_{4g-1}, \dots, w_2)$ and $(w_{v_1}, w_{v_2}, \dots, w_{v_{4g}})$ respectively. These $4g$-gons are bounded by the edge cycles $(e_1', e_{4g}', e_{4g-1}', \dots, e_2')$ and $(f_{c_1}', f_{c_2}', \dots, f_{c_{4g}}')$, with interior angles given by the supplements $\bar{\theta}_i = \pi - \theta_i$ located at each respective vertex $w_i$. Here, $e_k'$ denotes the directed edge $(w_{k+1}, w_k)$ and $f_k'$ denotes the directed edge $(w_{k+2g-1}, w_k)$.

All indices are calculated modulo $4g$. The full description of all the polygons is provided in Table \ref{table_general}. All subscripts are taken modulo $4g$.

\begin{table}[H]
\centering
\caption{Geometric specifications of the complementary polygons for genus $g \ge 2$.} 
\label{table_general}
\resizebox{\textwidth}{!}{%
\begin{tabular}{@{}llll@{}}
\toprule
Face & Vertex Sequence & Boundary Edge Cycle & Interior Angles \\ \midrule
$Q_k \ (1 \le k \le 2g)$ 
& $(w_k, w_{k+1}, w_{k+2g}, w_{k+2g+1})$ 
& $(e_k, f_{k+1}, e_{k+2g}, f_{k+2g+1})$ 
& $(\theta_k, \theta_{k+1}, \theta_{k+2g}, \theta_{k+2g+1})$ \\ \addlinespace

$Q_{2g+1}$ 
& $(w_1, w_{4g}, w_{4g-1}, \dots, w_2)$ 
& $(e_{4g}', e_{4g-1}', \dots, e_2', e_1')$ 
& $(\bar{\theta}_1, \bar{\theta}_{4g}, \bar{\theta}_{4g-1}, \dots, \bar{\theta}_2)$ \\ \addlinespace

$Q_{2g+2}$ 
& $(w_1, w_{1-(2g-1)}, w_{1-2(2g-1)}, \dots, w_{2g})$ 
& $(f_{1-(2g-1)}', f_{1-2(2g-1)}', \dots, f_{2g}', f_1')$ 
& $(\bar{\theta}_1, \bar{\theta}_{1-(2g-1)}, \bar{\theta}_{1-2(2g-1)}, \dots, \bar{\theta}_{2g})$ \\ \bottomrule
\end{tabular}%
}
\end{table}

Tracing the edge mappings under $\sigma_0^2 \sigma_1$ yields $e_k \mapsto f_{k-d}' \mapsto e_{k-d}$ and $f_k \mapsto e_{k+d}' \mapsto f_{k+d}$, with indices taken modulo $4g$ and step size $d = 2g - 2$. This partitions the edges into two disjoint families. Letting $c = \gcd(4g, 2g-2) = 2\gcd(2, g-1)$, the disjoint cycle representation is:
$$\sigma_0^2 \sigma_1 = \prod_{j=1}^{c} ( e_j, f_{j-d}', e_{j-d}, f_{j-2d}', e_{j-2d}, \dots ) \prod_{j=1}^{c} ( f_j, e_{j+d}', f_{j+d}, e_{j+2d}', f_{j+2d}, \dots )$$

Each family contains $c$ directed, edge-disjoint cycles, totaling $4\gcd(2, g-1)$ cycles. Standard cycles are defined as equivalence classes under the boundary reversal map $C = (x_1, \dots, x_m) \sim C^{-1} = (x_m', \dots, x_1')$. Because this reversal acts as a bijection between the two families, the number of standard cycles is exactly half the total directed cycles, yielding $2\gcd(2, g-1)$. Consequently, there are exactly $2$ standard cycles for even $g$, and $4$ for odd $g$. 

For even $g$, we obtain a required filling of size $2$, and for odd $g$, we obtain a filling of size $4$. Consequently, we obtain the filling $\Omega_g'$, and the complement of this filling decomposes the surface precisely into our defined faces, giving $S_g \setminus \Omega_g' = \bigcup_{i=1}^{2g+2} Q_i$.
\end{proof}

\begin{eg}\label{Example-2 non-uniform filling}
To make the abstract construction in Proposition \ref{Prop-general-genus-filling-g2} concrete, consider the case for $g=2$. Here, the filling $\Omega_2'$ decomposes the surface into four quadrilaterals ($|Q_1|=|Q_2|=|Q_3|=|Q_4|=4$) and two octagons ($|Q_5|=|Q_6|=8$). 

The corresponding fat graph $\Gamma_2$ is a $4$-regular graph with $8$ vertices and $16$ edges. Its vertex permutation $\sigma_0$ expands explicitly to:
$$\begin{aligned}
\sigma_0 &= (e_1, e_8', f_1, f_6')(e_2, e_1', f_2, f_7')(e_3, e_2', f_3, f_8')(e_4, e_3', f_4, f_1') \\
&\qquad (e_5, e_4', f_5, f_2')(e_6, e_5', f_6, f_3')(e_7, e_6', f_7, f_4')(e_8, e_7', f_8, f_5').
\end{aligned}$$

The six boundary components, determined by $\sigma_0 \circ \sigma_1$, match exactly with the faces detailed in Table \ref{table_g2} below. 

$$
\begin{aligned}
\sigma_0\sigma_1 &= (e_1, f_2, e_5, f_6)(e_2, f_3, e_6, f_7)(e_3, f_4, e_7, f_8)(e_4, f_5, e_8, f_1) \\
&\qquad (e_1', e_8', e_7', e_6', e_5', e_4', e_3', e_2')(f_1', f_6', f_3', f_8', f_5', f_2', f_7', f_4')
\end{aligned}
$$
Furthermore, computing $\sigma_0^2 \sigma_1$ reveals the standard cycles that form the filling. The computation yields:
$$
\begin{aligned}
\sigma_0^2\sigma_1 &= (e_1, f_7', e_7, f_5', e_5, f_3', e_3, f_1')(e_2, f_8', e_8, f_6', e_6, f_4', e_4, f_2') \\
&\qquad (f_1, e_3', f_3, e_5', f_5, e_7', f_7, e_1')(f_2, e_4', f_4, e_6', f_6, e_8', f_8, e_2')
\end{aligned}
$$

% $$\begin{aligned}
% \sigma_0^2 \sigma_1 &= (e_1, f_5', e_2, f_5') \\
% &\qquad (e_2, f_5', e_8, f_6', e_6, f_4', e_4, f_2') \\
% &\qquad (f_5, e_1', f_7, e_1', f_1, e_3', f_3, e_5') \\
% &\qquad (f_8, e_2', f_2, e_4', f_4, e_6', f_6, e_8').
% \end{aligned}$$
From these, we identify two edge-disjoint standard cycles. These project to two curves on $S_2$, which we denote by $\gamma_1$ and $\gamma_2$, explicitly giving us the filling $\Omega_2' = \{\gamma_1, \gamma_2\}$.

\begin{table}[H]
\centering
\caption{q              specifications of the complementary polygons for genus $g=2$.}
\label{table_g2}
\begin{tabular}{@{}llll@{}}
\toprule
Face & Vertex Sequence & Boundary Edge Cycle & Interior Angles \\ \midrule
$Q_1$ & $(w_1, w_2, w_5, w_6)$ & $(e_1, f_2, e_5, f_6)$ & $(\theta_1, \theta_2, \theta_5, \theta_6)$ \\
$Q_2$ & $(w_2, w_3, w_6, w_7)$ & $(e_2, f_3, e_6, f_7)$ & $(\theta_2, \theta_3, \theta_6, \theta_7)$ \\
$Q_3$ & $(w_3, w_4, w_7, w_8)$ & $(e_3, f_4, e_7, f_8)$ & $(\theta_3, \theta_4, \theta_7, \theta_8)$ \\
$Q_4$ & $(w_4, w_5, w_8, w_1)$ & $(e_4, f_5, e_8, f_1)$ & $(\theta_4, \theta_5, \theta_8, \theta_1)$ \\ \addlinespace
$Q_5$ & $(w_1, w_8, w_7, w_6, w_5, w_4, w_3, w_2)$ & $(e_8', e_7', e_6', e_5', e_4', e_3', e_2', e_1')$ & $(\bar{\theta}_1, \bar{\theta}_8, \bar{\theta}_7, \bar{\theta}_6, \bar{\theta}_5, \bar{\theta}_4, \bar{\theta}_3, \bar{\theta}_2)$ \\
$Q_6$ & $(w_1, w_6, w_3, w_8, w_5, w_2, w_7, w_4)$ & $(f_6', f_3', f_8', f_5', f_2', f_7', f_4', f_1')$ & $(\bar{\theta}_1, \bar{\theta}_6, \bar{\theta}_3, \bar{\theta}_8, \bar{\theta}_5, \bar{\theta}_2, \bar{\theta}_7, \bar{\theta}_4)$ \\ \bottomrule
\end{tabular}
\end{table}

Visually, the local structure at the vertices of $\Gamma_2$ is shown in Figure \ref{Fatgraph_Non-uniform_figure_2}, while the six global polygonal boundary components are illustrated in Figure \ref{6_polygon}.
\begin{figure}[H]
    \centering
    \includegraphics[width=0.98\linewidth]{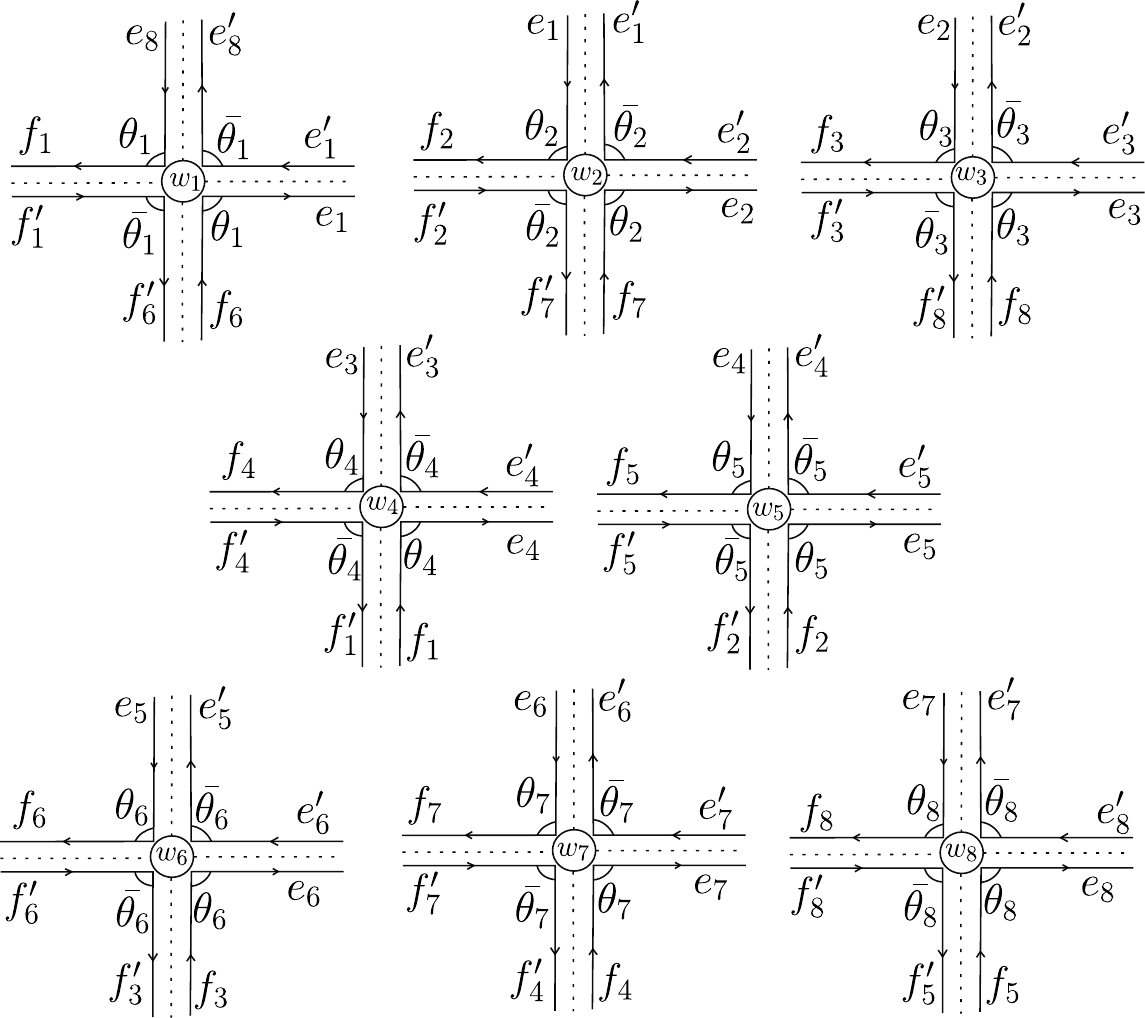}
    \caption{Local structure of a fat graph at a vertex with annotated geometric angles.}
    \label{Fatgraph_Non-uniform_figure_2}
\end{figure}

\begin{figure}[H]
    \centering
    \includegraphics[width=0.9\linewidth]{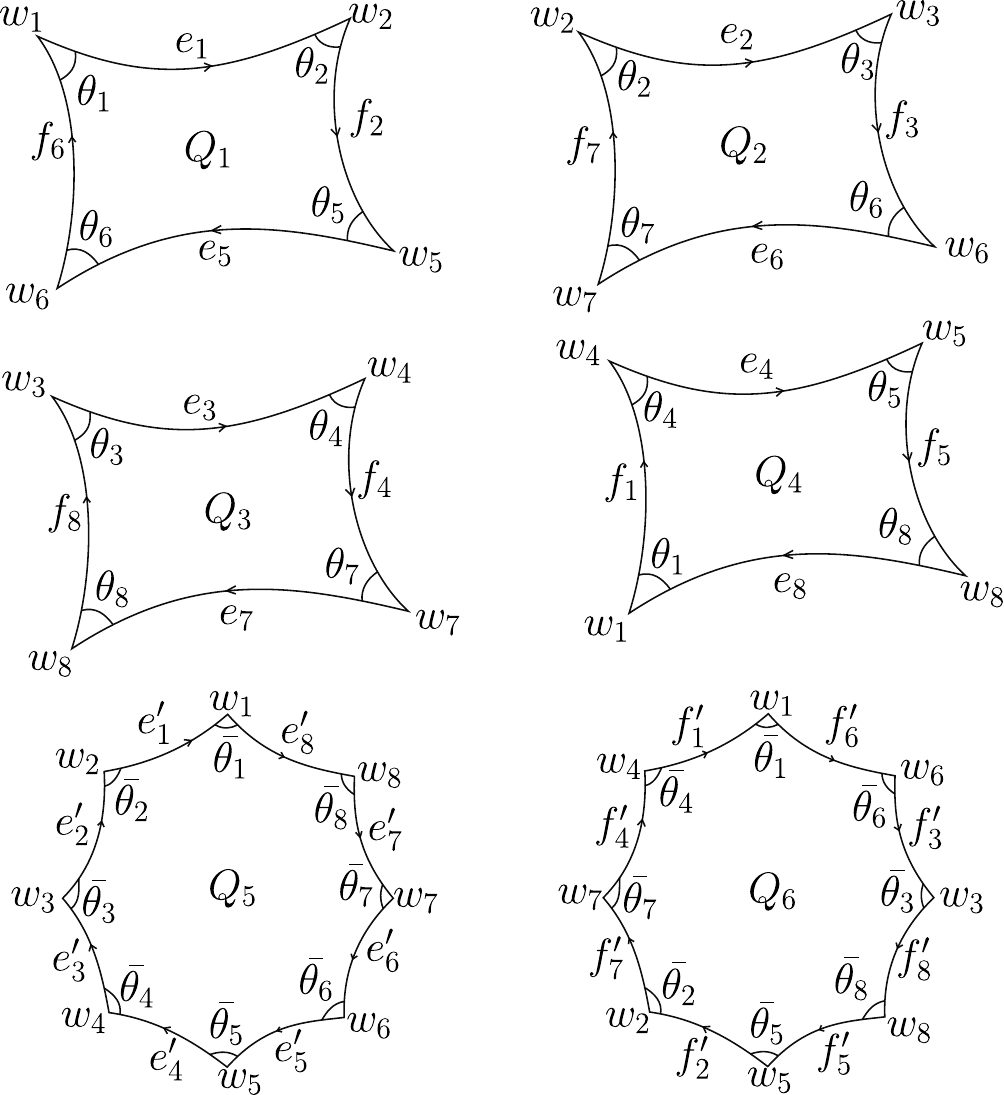}
    \caption{The six boundary components.}
    \label{6_polygon}
\end{figure}
\end{eg}

The following theorem establishes the absolute minimum of the geodesic length functions for the filling $\Omega_g'$ on $S_g$ for any $g \ge 2$.

\begin{thm}\label{Thm-Geodesic-length-minima}
Let $g \ge 2$ and $\Omega_g'$ be the filling on $S_g$ as defined in Proposition \ref{Prop-general-genus-filling-g2}. Then
$$ \ell_{\Omega_g'}(X) \ge 8g\Bigg[ \cosh^{-1}\!\left(\frac{\cos(\pi/2g)}{\sin(\theta/2)}\right) + \cosh^{-1}\!\left(\frac{\cos(\pi/4g)}{\cos(\theta/2)}\right)\Bigg], $$
where $\theta$ satisfies
$$ \tan\left(\frac{\theta}{2}\right) = \frac{\cos(\pi/4g)}{\cos(\pi/2g)}. $$
Equality holds if and only if $X$ is a Bolza-like surface (which coincides with the Bolza surface when $g=2$).
\end{thm}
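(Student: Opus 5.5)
The plan is to follow the template of Theorem~\ref{Geodesic length minima for Non-uniform case}: reduce to regular polygons, balance the angles, and then fix the remaining one-parameter freedom by requiring equal side lengths. By Kerckhoff's theorem \cite{KSP}, $\ell_{\Omega_g'}$ has a unique minimizer $X_0\in\mathcal{T}_g$, so it suffices to bound $\ell_{\Omega_g'}(X)$ from below and then exhibit a surface attaining the bound. On any $X$ the complement of the geodesic representatives consists of $2g$ quadrilaterals $Q_1,\dots,Q_{2g}$ and two $4g$-gons $Q_{2g+1},Q_{2g+2}$. Every edge of $\Gamma_g$ bounds exactly two faces, so
\[
\ell_{\Omega_g'}(X)=\tfrac12\sum_{i=1}^{2g+2}\mathrm{Peri}(Q_i).
\]
Table~\ref{table_general} records the angles: each vertex $w_i$ contributes $\theta_i$ twice to the small faces and $\bar\theta_i=\pi-\theta_i$ once to each big face.

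\textbf{Step 1: regularize.} Apply Bezdek's theorem \cite{BK} face by face, and then Lemma~\ref{Lemma-Perimeter-Minimization-General} to each family. The small faces are replaced by congruent regular polygons with a common angle $\theta_0$, the average of the $\theta_i$. The two big faces are replaced by congruent regular $4g$-gons with the supplementary common angle $\pi-\theta_0$. This works because the angle sums of the two families are tied together by the single constraint that the angles at each $w_i$ sum to $2\pi$. For the small-face family I would use the regular model polygon whose half-side is $\cosh^{-1}\bigl(\cos(\pi/2g)/\sin(\theta_0/2)\bigr)$, as in the statement; the big faces give half-side $\cosh^{-1}\bigl(\cos(\pi/4g)/\cos(\theta_0/2)\bigr)$. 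Counting $16g$ polygon sides in total yields
\[
\ell_{\Omega_g'}(X)\ \ge\ 8g\,\bigl[\,a(\theta_0)+b(\theta_0)\,\bigr],
\]
where $a$ and $b$ denote these two half-sides.

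\textbf{Step 2: the one-variable problem.} Minimize $\Phi(\theta)=a(\theta)+b(\theta)$ over $\theta\in(0,\pi)$. As $\theta\to0$ we have $a\to\infty$, and as $\theta\to\pi$ we have $b\to\infty$, so an interior minimum exists. Differentiating with the formula already computed in the proof of Proposition~\ref{Proposition}, the critical-point equation is exactly the condition that the regular small face and the regular big face have the same side length, i.e. $a(\theta)=b(\theta)$. By Lemma~\ref{lem:supplementary_angles} with $(n,m)=(2g,4g)$, this condition has the unique solution
\[
\tan(\theta/2)=\frac{\cos(\pi/4g)}{\cos(\pi/2g)} .
\]
Uniqueness of the critical point then identifies it as the global minimum and gives the stated constant.

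\textbf{Step 3: realization and rigidity.} At this $\theta$ all edges have a common length. The regular polygons can therefore be glued according to $\sigma_0$, with angle sum $2\theta+2(\pi-\theta)=2\pi$ at every vertex, producing a hyperbolic surface on which $\Omega_g'$ is geodesic and the bound is an equality. The equality cases of Bezdek's theorem and Lemma~\ref{Lemma-Perimeter-Minimization-General}, combined with Kerckhoff uniqueness, show this surface is $X_0$. Its symmetry is visible from the construction: the index shift $w_k\mapsto w_{k+1}$ preserves $\sigma_0$ and, on the regular model, is an isometry rotating each $4g$-gon. This is the symmetry I would use to identify $X_0$ as the Bolza-like surface. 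When $g=2$ the configuration is four regular quadrilaterals and two regular octagons, and the resulting surface is the Bolza surface.

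\textbf{Main obstacle.} The delicate step is Step 1 for the small faces. The faces are combinatorial quadrilaterals, yet the statement uses the parameter $\cos(\pi/2g)$ rather than $\cos(\pi/4)$; these coincide only at $g=2$. I would first verify that grouping the quadrilaterals around the order-$4g$ rotational symmetry produces the model polygon with that parameter. I would then check that Lemma~\ref{Lemma-Perimeter-Minimization-General} and Proposition~\ref{Proposition} apply with $n=2g$ for this family, with the balancing between the two families enforced through the shared vertex angles. I expect this step to need the most care.
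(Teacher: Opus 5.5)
Your Step~2 is where the argument breaks. Write $a(\theta)=\cosh^{-1}\bigl(c/\sin(\theta/2)\bigr)$ and $b(\theta)=\cosh^{-1}\bigl(c'/\cos(\theta/2)\bigr)$. Then $a'=-\tfrac12\coth a\,\cot(\theta/2)$ and $b'=\tfrac12\coth b\,\tan(\theta/2)$, so $\Phi'(\theta)=0$ if and only if $\tan^2(\theta/2)=\tanh b/\tanh a$. This is not the condition $a=b$: combined with $a=b$ it would force $\theta=\pi/2$, and there $a\neq b$. At the equal-side angle $\theta_1$ one has $\tan(\theta_1/2)<1$, hence $\Phi'(\theta_1)=\tfrac12\coth a\,\bigl(\tan(\theta_1/2)-\cot(\theta_1/2)\bigr)<0$. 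Your endpoint claim also fails. A regular quadrilateral with angle $\theta$ and a regular $4g$-gon with angle $\pi-\theta$ both exist only for $\pi/2g<\theta<\pi/2$, and at each end one half-side tends to $0$, not to $\infty$. So Steps~1--2 only give $\ell_{\Omega_g'}\ge 8g\inf\Phi$, which is strictly smaller than the length $8g\,\Phi(\theta_1)$ of the equal-side configuration. The relaxation forgets that adjacent faces share edges. Consequently neither the sharp constant nor the identification of $X_0$ in Step~3 follows. The paper's own proof has the same hole. It simply asserts that side-pairing forces the comparison polygons $Q^*,P^*$ to have equal sides, even though they are not faces of $X_0$. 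Your Step~2 tries to justify that assertion, and the justification is false.

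The missing idea is symmetry combined with Kerckhoff uniqueness, not a one-variable optimization. The shift $x_k\mapsto x_{k+1}$ on half-edges commutes with $\sigma_0$ and $\sigma_1$. It therefore induces a mapping class $\phi$ that permutes the curves of $\Omega_g'$, so $\ell_{\Omega_g'}\circ\phi=\ell_{\Omega_g'}$. Uniqueness of the minimizer then forces $\phi$ to fix $X_0$, so $\phi$ is realized by an isometry of $X_0$ preserving the geodesic filling. This isometry is transitive on vertices, on $e$-edges and on $f$-edges. Hence all $\theta_i$ coincide, all $e$-edges have a common length $L_e$, and all $f$-edges have a common length $L_f$. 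Since $Q_{2g+1}$ is bounded only by $e$-edges and $Q_{2g+2}$ only by $f$-edges, both are regular $4g$-gons with angle $\pi-\theta$, which gives $L_e=L_f$. The quadrilaterals are then regular, and Lemma~\ref{lem:supplementary_angles} with $(n,m)=(4,4g)$ determines $\theta$. Neither Bezdek's theorem nor Proposition~\ref{Proposition} is needed.

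This also settles your ``main obstacle''. The small faces are quadrilaterals, so the correct data are $\cos(\pi/4)$ and $\tan(\theta/2)=\cos(\pi/4)/\cos(\pi/4g)$. The minimum is $16g\cosh^{-1}\sqrt{\tfrac12+\cos^2(\pi/4g)}$. Note also that the lemma with $(n,m)=(2g,4g)$ gives the reciprocal of the ratio you wrote. No regrouping of quadrilaterals can justify the printed constants. For $g=2$ the first $\cosh^{-1}$ has argument about $0.89<1$. For $g=3$ the stated bound, about $35.4$, exceeds $\ell_{\Omega_3'}\approx 29.7$ on the explicit regular surface.
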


\begin{proof}
Since $\Omega_g'$ is a filling on $S_g$, classical Teichmüller theory guarantees that its length function $\ell_{\Omega_g'}$ admits a unique global minimum in $\mathcal{T}_g$. Let $X_0$ be this unique minimizing hyperbolic surface, and let $\theta_1^0, \dots, \theta_{4g}^0$ be the associated interior angles formed by the boundary components of the filling $\Omega_g'$. We denote the corresponding minimizing polygons by $Q_1^0, \dots, Q_{2g+2}^0$, where $Q_1^0, \dots, Q_{2g}^0$ are quadrilaterals and $Q_{2g+1}^0, Q_{2g+2}^0$ are $4g$-gons. 

The length of the filling on $X_0$ is given by $\ell_{\Omega_g'}(X_0) = \frac{1}{2} \sum_{k=1}^{2g+2} {Peri}(Q_k^0)$.

By Lemma \ref{Lemma-Perimeter-Minimization-General}, the total perimeter of the $2g$ quadrilaterals $Q_1^0, \dots, Q_{2g}^0$ is minimized when they are all identical regular quadrilaterals. Letting $Q^*$ be the regular quadrilateral realizing their unified average interior angle, which we denote here by $s_0/4g$, we obtain:
$$ \sum_{k=1}^{2g} {Peri}(Q_k^0) \ge 2g \cdot {Peri}(Q^*). $$

For the two $4g$-gons, $Q_{2g+1}^0$ and $Q_{2g+2}^0$, their interior angles consist exactly of the supplementary angles $\pi - \theta_k^0$ for $k=1, \dots, 4g$. Letting $P^*$ be the regular $4g$-gon with an interior angle of $\pi - s_0/4g$, Bezdek's result \cite{BK} yields ${Peri}(Q_{2g+i}^0) \ge {Peri}(P^*)$ for $1\leq i\le 2$. 

Thus, we have $\ell_{\Omega_g'}(X_0) \ge g \cdot {Peri}(Q^*) + {Peri}(P^*)$. We now prove that this lower bound is realized by a filling on $S_g$ for a suitable choice of $s_0$.

To realize this minimum, the geometric side-pairing conditions require the side lengths of $Q^*$ and $P^*$ to be equal. By Lemma \ref{lem:supplementary_angles}, this condition uniquely determines the interior angle $s_0/4g$, satisfying:
$$\tan\left(\frac{s_0}{8g}\right) = \frac{\cos(\pi/4g)}{\cos(\pi/2g)}.$$

Finally, we identify the minimizing surface $X_0$. The combinatorial cyclic shift $\phi: w_k \mapsto w_{k+1} \pmod{4g}$ induces a global isometric automorphism of order $4g$ on $X_0$. In the quotient $X_0 / \langle \phi \rangle$, the geometric centers of the two $4g$-gons yield two cone points of order $4g$, and the geometric centers of the $2g$ quadrilaterals yield a single cone point of order $2$. Therefore, the quotient $X_0 / \langle \phi \rangle$ has signature $(0; 2, 4g, 4g)$, proving that $X_0$ is a Bolza-like surface.
\end{proof}

\begin{rmk}
The minimizing surface obtained in Theorem \ref{Geodesic length minima for Non-uniform case} admits the highest possible order of symmetry, namely $4g+2$, whereas the Bolza-like surface obtained in Theorem \ref{Thm-Geodesic-length-minima} realizes the second highest order of symmetry, namely $4g$.
\end{rmk}

\section*{Acknowledgements} 
The First author would like to thank the Council of Scientific and Industrial Research (File Number: 09/1290(0005)2020-EMR-I ) for providing financial support. The second author gratefully acknowledges the financial support from the Science and Engineering Research Board (SERB), Government of India through MATRICS grant (File Number: MTR/2021/000067).

\bibliographystyle{alpha}
\bibliography{Bibilography}

\end{document}